\newcommand{\im}{\operatorname{im}}
\newcommand{\coker}{\operatorname{coker}}
\newcommand{\gr}{\widetilde G}
\newcommand{\wg}{\widetilde \gamma}
\newcommand{\w}{\widetilde w}
\newcommand{\wa}{\widetilde a}
\newcommand{\ww}{\mathrm w}
\newtheorem{theorem}{Theorem}[section]
\newtheorem{lemma}[theorem]{Lemma}
\newtheorem{corollary}[theorem]{Corollary}
\newtheorem{proposition}[theorem]{Proposition}
\theoremstyle{definition}
\newtheorem{definition}[theorem]{Definition}
\theoremstyle{remark}
\newtheorem{remark}[theorem]{Remark}
\numberwithin{equation}{section}
\begin{document}

\title[Cohomology rings of oriented Grassmann manifolds $\widetilde G_{2^t,4}$]
{Cohomology rings of oriented Grassmann manifolds $\widetilde G_{2^t,4}$}

\author{Uro\v s A.\ Colovi\' c}
\address{}
\email{urosandrijacolovic@gmail.com}

\author{Milica Jovanovi\'c}
\address{University of Belgrade,
  Faculty of mathematics,
  Studentski trg 16,
  Belgrade,
  Serbia}
\email{milica.jovanovic@matf.bg.ac.rs}

\author{Branislav I.\ Prvulovi\'c}
\address{University of Belgrade,
  Faculty of mathematics,
  Studentski trg 16,
  Belgrade,
  Serbia}
\email{branislav.prvulovic@matf.bg.ac.rs}

\thanks{The second author was partially supported by the Science Fund of the Republic of Serbia, Grant No.\ 7749891, Graphical Languages -- GWORDS} 
\thanks{The third author was partially supported by the Ministry of Science, Technological Development and Innovations of the Republic of Serbia [contract no.\ 451-03-47/2023-01/200104].}

\subjclass[2020]{Primary 55R40, 13P10; Secondary 55s05}



\keywords{Grassmann manifolds, Gr\"obner bases, Stiefel--Whitney classes}

\begin{abstract}
We give a description of the mod 2 cohomology algebra of the oriented Grassmann manifold $\widetilde G_{2^t,4}$ as the quotient of a polynomial algebra by a certain ideal. In the process we find a Gr\"obner basis for that ideal, which we then use to exhibit an additive basis for $H^*(\widetilde G_{2^t,4};\mathbb Z_2)$.
\end{abstract}

\maketitle



\section{Introduction and statement of main results}
\label{intro}

The Grassmann manifold $G_{n,k}$ is defined to be the manifold of all $k$-dimensional subspaces of $\mathbb R^n$ and the oriented Grassmann manifold $\widetilde G_{n,k}$ is the manifold of all oriented $k$-subspaces of $\mathbb R^n$. There are homeomorphisms $G_{n,k} \approx G_{n,n-k}$ and $\widetilde G_{n,k} \approx \widetilde G_{n,n-k}$, so there is no loss of generality in assuming $n \geqslant 2k$. The mod 2 cohomology algebra of $G_{n,k}$ has been determined by Borel in \cite{Borel} to be a quotient of the polynomial algebra over Stiefel--Whitney classes of the canonical vector bundle $\gamma_{n,k}$. Although there is a (universal) double cover $p:\widetilde G_{n,k}\rightarrow G_{n,k}$, far less is known about the mod 2 cohomology of the oriented Grassmann manifold $\widetilde G_{n,k}$. For $k = 2$, Korba\v s and Rusin gave a complete description of $H^*(\widetilde G_{n,2}; \mathbb Z_2)$ in \cite{KorbasRusin:Palermo}. Basu and Chakraborty made first steps for $k = 3$ in \cite{BasuChakraborty}, which were improved first in \cite{CP} for $n$ a power of two, and then in \cite{JP} for $n\in\{2^t-3,2^t-2,2^t-1\}$, $t\geqslant3$ (for $n = 2^t-1$ there is still an ambiguity left). The remaining values of $n$ were examined by Matszangosz and Wendt in \cite{Wendt}, where the authors gave a complete description of $H^*(\widetilde G_{n,3};\mathbb Z_2)$ for $2^{t-1}< n \leqslant 2^t-4$.

In this paper we go a step further and consider the case $k = 4$ and $n$ a power of two. The fact that among all Grassmannians $\widetilde G_{n,3}$ those with $n=2^t$ have the simplest cohomology algebra structure led us to begin examining the case $k=4$ starting with $n$ a power of two. Apart from commonly known facts (reviewed in Section \ref{sec:preliminaries}) there have been very few results so far concerning $H^*(\widetilde G_{n,4};\mathbb Z_2)$. The characteristic rank of $H^*(\widetilde G_{n,4};\mathbb Z_2)$ was studied and computed in \cite{Korbas:Osaka} and \cite{bane-marko}. In \cite{Rusin} Rusin determined generators of the cohomology groups $H^j(\widetilde G_{n,4};\mathbb Z_2)$ for $n = 8, 9, 10, 11$. In \cite{Wendt} Matszangosz and Wendt performed some computer calculations to exhibit the degrees of the so-called anomalous classes in $H^*(\widetilde G_{n,4};\mathbb Z_2)$ for $n\leqslant32$.

Our main result is contained in the following statement.

\begin{theorem}\label{Cohomology_G_2^t,4} Let $t\geqslant3$. There is an isomorphism of graded algebras
    \begin{equation}\label{main_thm}
        H^*(\widetilde G_{2^t,4};\mathbb Z_2) \cong \frac{\mathbb Z_2[ w_2, w_3, w_4, a_{2^t-4}]}{(g_{2^t-2}, g_{2^t-1}, g_{2^t}, a_{2^t-4}^2+Pa_{2^t-4}+Q)},\end{equation}
    where $|w_i| = i$, $i=2,3,4$, $|a_{2^t-4}|=2^t-4$, and $P$ and $Q$ are some polynomials in $w_2$, $w_3$ and $w_4$.
\end{theorem}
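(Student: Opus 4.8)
The plan is to obtain \eqref{main_thm} from Borel's presentation $H^*(G_{2^t,4};\mathbb Z_2)=\mathbb Z_2[w_1,w_2,w_3,w_4]/(\bar w_{2^t-3},\bar w_{2^t-2},\bar w_{2^t-1},\bar w_{2^t})$ together with the Gysin sequence of the double cover $p\colon\widetilde G_{2^t,4}\to G_{2^t,4}$, whose Euler class is $w_1$. For each $i$ that sequence yields a short exact sequence $0\to\bigl(H^*(G_{2^t,4})/(w_1)\bigr)^i\to H^i(\widetilde G_{2^t,4})\to K^i\to 0$ in which the first map is $p^*$ and $K^i:=\ker\bigl(\cdot w_1\colon H^i(G_{2^t,4})\to H^{i+1}(G_{2^t,4})\bigr)$. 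Since $p^*$ is a ring homomorphism with kernel the ideal $(w_1)$, its image is the subring $\im p^*\cong H^*(G_{2^t,4};\mathbb Z_2)/(w_1)$, which, writing $g_j$ for the reduction of $\bar w_j$ modulo $w_1$, equals $\mathbb Z_2[w_2,w_3,w_4]/(g_{2^t-3},g_{2^t-2},g_{2^t-1},g_{2^t})$. The first step I would carry out is a lemma special to $n$ a power of two: $\bar w_{2^t-3}$ is divisible by $w_1$, equivalently $g_{2^t-3}=0$; consequently $\im p^*=\mathbb Z_2[w_2,w_3,w_4]/(g_{2^t-2},g_{2^t-1},g_{2^t})$ is a quotient of a polynomial ring in three variables by three elements.

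Next I would show that $g_{2^t-2},g_{2^t-1},g_{2^t}$ form a regular sequence, so that $\im p^*$ is a complete intersection, hence a Poincar\'e duality algebra, whose top nonzero degree is $(2^t-2)+(2^t-1)+2^t-(2+3+4)=3(2^t-4)$. The tool is a Gr\"obner basis: compute a Gr\"obner basis of the ideal $(g_{2^t-2},g_{2^t-1},g_{2^t})\subseteq\mathbb Z_2[w_2,w_3,w_4]$ for a graded monomial order, check that its leading-term ideal contains a power of each variable (which gives finite-dimensionality, hence regularity of the sequence), and read off both the standard-monomial additive basis and the Poincar\'e polynomial $P(x)$ of $\im p^*$. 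Producing a clean, uniform-in-$t$ description of this Gr\"obner basis, and of the additive basis it yields, is, I expect, the main technical obstacle, and it is presumably where the bulk of the paper's computations lie; the hypothesis $n=2^t$ should be what keeps this basis regular enough to describe recursively in $t$.

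With $\im p^*$ in hand I would invoke Poincar\'e duality twice. On one side, $\widetilde G_{2^t,4}$ is a closed manifold of dimension $D:=4(2^t-4)$. On the other, Poincar\'e duality of $H^*(G_{2^t,4};\mathbb Z_2)$ makes $\cdot w_1\colon H^i\to H^{i+1}$ the transpose of $\cdot w_1\colon H^{D-i-1}\to H^{D-i}$, whence $\dim K^i=\dim\coker\bigl(\cdot w_1\colon H^{D-i-1}\to H^{D-i}\bigr)=\dim(\im p^*)^{D-i}$; together with the short exact sequence this gives $\dim_{\mathbb Z_2}H^i(\widetilde G_{2^t,4})=\dim(\im p^*)^i+\dim(\im p^*)^{D-i}$. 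Since $\im p^*$ has top nonzero degree $3(2^t-4)$ and $D-3(2^t-4)=2^t-4$, this rewrites as $\dim(\im p^*)^i+\dim(\im p^*)^{i-(2^t-4)}$, so the Poincar\'e polynomial of $\widetilde G_{2^t,4}$ is $(1+x^{2^t-4})P(x)$; in particular everything below degree $2^t-4$ already lies in $\im p^*$ (which recovers the value $2^t-5$ of the characteristic rank), while $H^{2^t-4}(\widetilde G_{2^t,4})$ contains a single class $a_{2^t-4}\notin\im p^*$. Because $H^*(G_{2^t,4};\mathbb Z_2)$ satisfies Poincar\'e duality, $K^*=\operatorname{Ann}(w_1)$ is, up to a shift by $D$, the $\mathbb Z_2$-linear dual of the $H^*(G_{2^t,4})$-module $H^*(G_{2^t,4})/(w_1)=\im p^*$; as the latter is cyclic, so is $K^*$, with generator in degree $D-3(2^t-4)=2^t-4$. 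Hence I may choose $a_{2^t-4}$ with $\partial(a_{2^t-4})$ a generator of $K^*$, where $\partial\colon H^*(\widetilde G_{2^t,4})\to H^*(G_{2^t,4})$ is the connecting map. Since $\partial$ is $H^*(G_{2^t,4})$-linear, $\partial\bigl(a_{2^t-4}\cdot\im p^*\bigr)=K^*$, and comparing dimensions forces $H^*(\widetilde G_{2^t,4})=\im p^*\oplus a_{2^t-4}\cdot\im p^*$; in particular $H^*(\widetilde G_{2^t,4})$ is generated as an algebra by $w_2,w_3,w_4,a_{2^t-4}$ and is free of rank two as a module over $\mathbb Z_2[w_2,w_3,w_4]/(g_{2^t-2},g_{2^t-1},g_{2^t})$.

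Finally, rank-two freeness forces $a_{2^t-4}^2=P'a_{2^t-4}+Q'$ for unique $P'\in(\im p^*)^{2^t-4}$ and $Q'\in(\im p^*)^{2(2^t-4)}$; lifting $P'$ and $Q'$ to polynomials $P,Q\in\mathbb Z_2[w_2,w_3,w_4]$ yields a well-defined graded surjection
\[
\frac{\mathbb Z_2[w_2,w_3,w_4,a_{2^t-4}]}{\bigl(g_{2^t-2},\,g_{2^t-1},\,g_{2^t},\,a_{2^t-4}^2+Pa_{2^t-4}+Q\bigr)}\longrightarrow H^*(\widetilde G_{2^t,4};\mathbb Z_2).
\]
Both sides are free of rank two over $\im p^*$ with the same Poincar\'e polynomial $(1+x^{2^t-4})P(x)$, so this surjection is an isomorphism in every degree, which is \eqref{main_thm}. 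Beyond the Gr\"obner basis computation, the verifications I would treat with care are the lemma $g_{2^t-3}=0$, the regularity of $g_{2^t-2},g_{2^t-1},g_{2^t}$, and the identification of $K^*$ as a cyclic module (equivalently, that a single anomalous generator in degree $2^t-4$ suffices); the explicit polynomials $P$ and $Q$ are not needed for the statement, but can be extracted by reducing $a_{2^t-4}^2$ to standard form using the Gr\"obner basis.
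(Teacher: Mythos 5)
Your proposal is correct in substance, but it follows a genuinely different route from the paper. The paper never argues directly from the Gysin sequence of the double cover: it introduces the auxiliary space $W^{2^t}_{3,1}$ with its two sphere-bundle projections onto $\gr_{2^t,3}$ and $\gr_{2^t,4}$, uses Serre spectral sequences together with known results on $H^*(\gr_{2^t,3})$ from \cite{CP}, the characteristic rank computations of \cite{Korbas:Osaka} and Rusin's $t=3$ case \cite{Rusin} to prove that $\w_2,\w_3,\w_4,\wa_{2^t-4}$ generate the algebra, applies Leray--Hirsch on $W^{2^t}_{3,1}$ to get $\wa_{2^t-4}^2=\widetilde P\wa_{2^t-4}+\widetilde Q$, and then computes an explicit Gr\"obner basis of the four-variable ideal $I$ to match total dimensions (via Lemma \ref{1/2dim}) and conclude $I=\ker\Psi$. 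You work entirely on the base: the Gysin sequence of $p$ gives $0\to(\im p^*)^i\to H^i(\gr_{2^t,4})\to K^i\to0$ with $K^i=\ker(\cdot w_1)$; the hypothesis $n=2^t$ enters only through $g_{2^t-3}=0$, which makes $\im p^*$ an Artinian quotient of $\mathbb Z_2[w_2,w_3,w_4]$ by three homogeneous elements, hence a complete intersection and a Poincar\'e duality algebra with socle degree $3(2^t-4)$; Poincar\'e duality of $G_{2^t,4}$ identifies $K^*$ with the shifted dual of $\im p^*$, and the projection formula for the transfer yields $H^*(\gr_{2^t,4})=\im p^*\oplus\wa_{2^t-4}\cdot\im p^*$, after which the quadratic relation and the isomorphism follow by comparing Hilbert series degreewise. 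This buys a proof of Theorem \ref{Cohomology_G_2^t,4} with no spectral sequence over $W^{2^t}_{3,1}$, no input from $\gr_{2^t,3}$, and in fact no explicit Gr\"obner basis at all (finite-dimensionality of $\im p^*$ is automatic from (\ref{imp*}), and your final count only needs that both sides have Hilbert series $(1+x^{2^t-4})h(x)$ with $h$ the Hilbert series of $\im p^*$, not $h$ itself); conversely, the paper's Gr\"obner basis is exactly what produces the explicit additive basis of Corollary \ref{cor:aditivna_baza}, and its geometric handle on $\wa_{2^t-4}$ via $sp^*$ and $c_{2^t-4}$ feeds into the later proof that $P\neq0$, which your argument does not address (nor does the statement require it).

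One step you should patch: ``as the latter is cyclic, so is $K^*$'' is not a valid inference --- the $\mathbb Z_2$-linear dual of a cyclic module over an Artinian graded algebra is cyclic precisely when that algebra is Gorenstein, not in general. The repair is already contained in your own setup: since $g_{2^t-2},g_{2^t-1},g_{2^t}$ is a regular sequence, $\im p^*\cong H^*(G_{2^t,4})/(w_1)$ is an Artinian complete intersection, hence Gorenstein, so its dual is free of rank one over it, and therefore $K^*=\operatorname{Ann}(w_1)$ is cyclic with generator the unique nonzero class in degree $D-3(2^t-4)=2^t-4$. With that justification inserted, the remaining steps (the $H^*(G_{2^t,4})$-linearity of the connecting/transfer map, the direct-sum decomposition, freeness of the quotient ring over $\im p^*$ with basis $1,a_{2^t-4}$, and the final dimension comparison) go through as you wrote them.
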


The polynomials $g_r$ appearing in (\ref{main_thm}) are well-known polynomials in $w_2$, $w_3$ and $w_4$. 
The cosets of the variables $w_2$, $w_3$ and $w_4$ correspond via the isomorphism (\ref{main_thm}) to the Stiefel--Whitney classes $\widetilde w_2, \widetilde w_3$ and $\widetilde w_4$ (respectively) of the canonical vector bundle $\widetilde \gamma_{2^t,4}$ over $\widetilde G_{2^t,4}$. The coset of the fourth variable $a_{2^t-4}$ corresponds to a class $\widetilde a_{2^t-4}\in H^{2^t-4}(\widetilde G_{2^t,4};\mathbb Z_2)$, which is not in the image of the induced map $p^*:H^*(G_{2^t,4};\mathbb Z_2)\rightarrow H^*(\widetilde G_{2^t,4};\mathbb Z_2)$ (equivalently, $\widetilde a_{2^t-4}$ is not expressible as a polynomial in $\widetilde w_2, \widetilde w_3$ and $\widetilde w_4$).


We prove Theorem \ref{Cohomology_G_2^t,4} in two steps. First, we show that, besides Stiefel--Whitney classes $\widetilde w_2, \widetilde w_3$ and $\widetilde w_4$, one more cohomology class $\widetilde a_{2^t-4}\in H^{2^t-4}(\widetilde G_{2^t,4};\mathbb Z_2)$ suffices to generate $H^*(\widetilde G_{2^t,4};\mathbb Z_2)$ as an algebra. We do this by applying the method of Basu and Chakraborty \cite{BasuChakraborty} -- we examine the Leray--Serre spectral sequence of two different sphere bundles. Next, we show that $H^*(\widetilde G_{2^t,4};\mathbb Z_2)$ has the multiplicative structure as stated in (\ref{main_thm}). The main tools used in this part include the theory of Gr\"obner bases and the Leray--Hirsch theorem, which, under certain conditions, gives an additive isomorphism between the cohomology of the total space of a fiber bundle and the cohomology of the product of its fiber and base space. We use this theorem in considering the square $\widetilde a_{2^t-4}^2$. Since $H^*(\widetilde G_{2^t,2};\mathbb Z_2) \cong\im p^*\otimes\Lambda(\wa_{2^t-2})$ \cite[Theorem 2.1(b)]{KorbasRusin:Palermo} and $H^*(\widetilde G_{2^t,3};\mathbb Z_2) \cong\im p^*\otimes\Lambda(\wa_{2^t-1})$ \cite[Theorem 1.1]{CP}, one might expect that $H^*(\widetilde G_{2^t,4};\mathbb Z_2)$ also splits as the tensor product of $\im p^*$ and the exterior algebra on $\widetilde a_{2^t-4}$, i.e., that $\widetilde a_{2^t-4}^2=0$. However, this is not the case, as we show in Proposition \ref{Pneq0}. By using the methods of Gr\"obner bases, in Corollary \ref{cor:aditivna_baza} we obtain an additive basis for $H^*(\widetilde G_{2^t,4};\mathbb Z_2)$. 

The paper is organized as follows. In Section \ref{sec:preliminaries} we give the necessary background on the cohomology of Grassmann manifolds as well as the theory of Gr\" obner bases. In Section \ref{sec:Indecomposables} we determine the indecomposable elements (algebra generators) of $H^*(\gr_{2^t,4};\mathbb Z_2)$, and thus establish that $H^*(\widetilde G_{2^t,4};\mathbb Z_2)$ is isomorphic to the quotient of the polynomial algebra $\mathbb Z_2[ w_2, w_3, w_4, a_{2^t-4}]$ by some ideal. A Gr\"obner basis for this ideal is computed in Section \ref{sec:main_proof}, which is devoted to proving Theorem \ref{Cohomology_G_2^t,4}. The Gr\"obner basis is then used to present an additive basis for $H^*(\widetilde G_{2^t,4};\mathbb Z_2)$. In this section we also establish that $\wa_{2^t-4}^2\neq0$ in $H^*(\gr_{2^t,4};\mathbb Z_2)$; more precisely, that the polynomial $P$ from (\ref{main_thm}) cannot be zero.  In the last Section \ref{sec:example} we illustrate the usage of our additive basis by considering the particular case $t=3$ and deducing some additional information on $H^*(\gr_{8,4};\mathbb Z_2)$.

\section{Preliminaries}
\label{sec:preliminaries}

\subsection{Cohomology of Grassmann manifolds}

We will be working solely with the mod 2 cohomology, so we write $H^*(-)$ for $H^*(-;\mathbb Z_2)$.

Due to Borel \cite{Borel} it is well known that for the mod 2 cohomology ring of the Grassmann manifold the following holds:
\begin{equation}\label{Borel}
H^*(G_{n,k})\cong \frac{\mathbb Z_2[w_1,w_2,\dots,w_k]}{(\overline w_{n-k+1},\dots,\overline w_n)},
\end{equation}
where $w_i$ for $1\leqslant i \leqslant k$ corresponds to the $i$-th Stiefel--Whitney class  $w_i(\gamma_{n,k})$ of the canonical vector bundle $\gamma_{n,k}$ over $G_{n,k}$, and $\overline w_r$ for $r\geqslant0$ are polynomials in $w_1,w_2,\dots,w_k$ given by the relation
\begin{equation}\label{eq_mult}
(1+w_1+w_2+\cdots + w_k)(\overline w_0 + \overline w_1 +\overline w_2+\cdots) = 1.\end{equation}
In other words, $\overline w_0+\overline w_1 + \overline w_2+ \cdots$ is the inverse of the element $1+w_1+w_2+\cdots+w_k$ in the ring of power series $\mathbb Z_2[[w_1,w_2,\dots,w_k]]$. The (cosets of the) polynomials $\overline w_r$, $r\geqslant0$, correspond via the above isomorphism to the Stiefel--Whitney classes of the orthogonal complement $\gamma_{n,k}^\perp$ of the canonical bundle $\gamma_{n,k}$.

From (\ref{eq_mult}) one gets the following recurrent formula:
\begin{equation}\label{eq_rec_w}
    \overline w_r = w_1\overline w_{r-1} + w_2 \overline w_{r-2} + \cdots + w_k \overline w_{r-k}, \quad r\geqslant k.
\end{equation}
Moreover, it is not difficult to deduce the following equation from (\ref{eq_mult}):
\begin{equation}\label{eq_explicitly_w}
    \overline w_r = \sum_{a_1+2a_2+\cdots +ka_k = r} [a_1,a_2,\dots, a_k]\,  w_1^{a_1} w_2^{a_2}\cdots w_k^{a_k},\quad r\geqslant0,
\end{equation}
where 
\[[a_1,a_2,\dots, a_k] = \binom{a_1+a_2+\cdots + a_k}{a_1}\binom{a_2+\cdots +a_k}{a_2}\cdots\binom{a_{k-1} + a_k}{a_{k-1}}.\]
Recall that all of the previous equations are in a $\mathbb Z_2$-algebra, so these binomial coefficients are either 0 or 1.

The Grassmann manifold $G_{n,k}$ is covered by the oriented Grassmann manifold $\widetilde G_{n,k}$ via the double cover $p: \widetilde G_{n,k} \rightarrow G_{n,k}$ that sends each oriented $k$-subspace in $\mathbb R^n$ to the same $k$-subspace, but without the orientation. The induced homomorphism $p^*:H^*(G_{n,k})\rightarrow H^*(\widetilde G_{n,k})$ sends each class $w_i(\gamma_{n,k})$, $1\leqslant i \leqslant k$, to the corresponding Stiefel--Whitney class $\widetilde w_i = w_i(\widetilde \gamma_{n,k})$ of the oriented canonical bundle $\widetilde\gamma_{n,k}$ over $\widetilde G_{n,k}$. The image of $p^*$ is a subalgebra of $H^*(\widetilde G_{n,k})$, and it can be shown that 
\begin{equation}
\label{imp*}
    \im p^* \cong \frac{\mathbb Z_2[w_2,\dots,w_k]}{(g_{n-k+1},\dots,g_n)}
    \end{equation}
(see e.g.\ \cite[p.\ 197]{CP}), where $w_i$ corresponds to $\widetilde w_i$ and $g_r\in\mathbb Z_2[w_2,\dots,w_k]$ is the modulo $w_1$ reduction of the polynomial $\overline w_r\in\mathbb Z_2[w_1,w_2,\dots,w_k]$, for $r\geqslant0$. Therefore, reducing equations (\ref{eq_rec_w}) and (\ref{eq_explicitly_w}) modulo $w_1$ one obtains:
\begin{equation}\label{eq_rec_g}
    g_r =   w_2 g_{r-2} + \cdots +   w_k g_{r-k}, \quad r\geqslant k,
\end{equation}
and
\begin{equation}\label{eq_explicitly_g}
    g_r = \sum_{2a_2+\cdots +ka_k = r} \binom{a_2+\cdots +a_k}{a_2}\cdots\binom{a_{k-1} + a_k}{a_{k-1}}\,  w_2^{a_2}\cdots w_k^{a_k},\quad r\geqslant0.
\end{equation}
Actually, if we define $g_{-1}=g_{-2}=\cdots=g_{-k+1}:=0$, then it is routine to check that (\ref{eq_rec_g}) holds for all integers $r\geqslant1$. Furthermore, using induction on $i$, this relation is easily generalized to
\begin{equation}\label{eq_rec_g_gen}
    g_r =   w_2^{2^i} g_{r-2\cdot 2^i} + \cdots +   w_k^{2^i} g_{r-k\cdot 2^i}=\sum_{j=2}^kw_j^{2^i} g_{r-j\cdot 2^i}, \quad r\geqslant1+k\cdot(2^i-1),\, i\geqslant0.
\end{equation}
Namely, the base case $i=0$ is (\ref{eq_rec_g}) (for $r\geqslant1$); and if $i\geqslant1$, assuming that (\ref{eq_rec_g_gen}) is true for $i-1$, then for $r\geqslant1+k\cdot(2^i-1)$ we have
\[g_r =\sum_{j=2}^kw_j^{2^{i-1}} g_{r-j\cdot 2^{i-1}}=\sum_{j=2}^k\sum_{l=2}^kw_j^{2^{i-1}}w_l^{2^{i-1}} g_{r-j\cdot 2^{i-1}-l\cdot2^{i-1}}=\sum_{j=2}^kw_j^{2^i}g_{r-j\cdot 2^i},\]
since the summands with $l\neq j$ cancel out (only those for $l=j$ remain).

It is a known fact that the vector space dimension of $\im p^*$ is half of the dimension of $H^*(\widetilde G_{n,k})$. In the next lemma we prove that this dimension equation holds more generally -- for any sphere bundle $p$. This fact will play an important role in proving Theorem \ref{Cohomology_G_2^t,4}.

\begin{lemma}\label{1/2dim}
    Let $n$ be a positive integer and $S^{n-1}\rightarrow E \xrightarrow{p} B$ a sphere bundle such that $H^*(B)$ is a finite-dimensional vector space (over $\mathbb Z_2$). If $p^*:H^*(B)\rightarrow H^*(E)$ is the induced morphism, then
    \[\dim H^*(E)  = 2\dim(\im p^*).\]
\end{lemma}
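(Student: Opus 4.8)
The plan is to use the Gysin sequence of the sphere bundle $S^{n-1}\to E\xrightarrow{p} B$ with $\mathbb Z_2$ coefficients. Since we are working mod $2$, orientability is not an issue, so there is a long exact sequence
\[
\cdots \to H^{i-n}(B)\xrightarrow{\,\cup\, e\,} H^{i}(B)\xrightarrow{\,p^*\,} H^{i}(E)\xrightarrow{\,\int\,} H^{i-n+1}(B)\xrightarrow{\,\cup\, e\,} H^{i+1}(B)\to\cdots,
\]
where $e\in H^n(B)$ is the (mod $2$) Euler class and $\int$ denotes integration along the fiber. Splicing this into short exact sequences, for each $i$ we get
\[
0\to \coker\bigl(H^{i-n}(B)\xrightarrow{\cup e} H^i(B)\bigr)\to H^i(E)\to \ker\bigl(H^{i-n+1}(B)\xrightarrow{\cup e} H^{i+1}(B)\bigr)\to 0,
\]
so that $\dim H^i(E)=\dim\coker(\cup e)_i+\dim\ker(\cup e)_{i-n+1}$. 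Summing over all $i$ (the sum is finite because $H^*(B)$ is finite-dimensional, hence so is $H^*(E)$) gives $\dim H^*(E)=\dim\coker(\cup e)+\dim\ker(\cup e)$, where now $\cup e\colon H^*(B)\to H^{*+n}(B)$ is viewed as a single graded linear map. Since for any linear endomorphism-like map between finite-dimensional spaces of the same total dimension one has $\dim\coker=\dim\ker$ (rank–nullity, using $\dim H^*(B)$ on both sides), it follows that $\dim H^*(E)=2\dim\ker(\cup e)$.

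It remains to identify $\ker(\cup e)$ with $\im p^*$. From the exactness of the Gysin sequence, $\im p^*=\ker(\int)$, and again by exactness $\ker(\int\colon H^*(E)\to H^{*-n+1}(B))$ maps onto... more directly: the map $\int$ has image $\ker(\cup e\colon H^{*-n+1}(B)\to H^{*+1}(B))$, and $p^*$ has image $\coker(\cup e)$. So $\dim(\im p^*)=\dim\coker(\cup e)=\dim\ker(\cup e)$, and combining with the previous paragraph, $\dim H^*(E)=2\dim(\im p^*)$, as claimed.

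The only genuinely substantive point is the availability of the Gysin sequence and the rank–nullity bookkeeping; I expect the main (minor) obstacle to be making sure the finiteness hypothesis on $H^*(B)$ is used correctly to guarantee that all the infinite direct sums are actually finite and that $\cup e$, as a graded map $H^*(B)\to H^*(B)$ (up to the degree shift by $n$), indeed has equal-dimensional source and target so that $\dim\ker=\dim\coker$. An alternative, essentially equivalent route would be to run the Leray–Serre spectral sequence of the bundle: the $E_2$-page is $H^*(B)\otimes H^*(S^{n-1})=H^*(B)\otimes\Lambda(x_{n-1})$, the only possibly nonzero differential is $d_n$ determined by $d_n(x_{n-1})=e$, and $E_\infty=E_{n+1}$; chasing dimensions on the two columns gives the same count. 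I would present the Gysin-sequence argument as it is the shortest.
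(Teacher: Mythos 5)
Your argument is correct and follows essentially the same route as the paper: both proofs run the mod $2$ Gysin sequence of the sphere bundle, splice it into short exact sequences, sum dimensions, and finish with a rank--nullity count made legitimate by the finite-dimensionality of $H^*(B)$. The only difference is bookkeeping: you splice around the terms $H^i(E)$ and apply rank--nullity to $\cup\, e\colon H^*(B)\to H^*(B)$ (identifying $\im p^*$ with $\coker(\cup\, e)$), whereas the paper splices around the terms $H^i(B)$ and applies the identity $\dim H^*(B)=\dim(\ker p^*)+\dim(\im p^*)$ to $p^*$ itself; the two versions are equivalent.
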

\begin{proof} The Gysin sequence for the sphere bundle $S^{n-1}\rightarrow E \xrightarrow{p} B$ is given in the following diagram
\[\cdots \rightarrow H^{i+n-1}(B) \xrightarrow{p^*} H^{i+n-1}(E) \rightarrow H^i(B) \xrightarrow{\cdot e} H^{i+n}(B) \xrightarrow{p^*} H^{i+n}(E)\rightarrow \cdots\]
where $e\in H^{n}(B)$ is the mod 2 Euler class of the bundle. This long exact sequence can be broken up into short exact sequences:
\[0\rightarrow C_i \rightarrow H^i(B) \rightarrow K_i \rightarrow 0, \quad i\in\mathbb Z,\]
where 
\[C_i = \coker\left(p^*:H^{i+n-1}(B)\rightarrow H^{i+n-1}(E)\right)\] and
\[K_i = \ker\left(p^*:H^{i+n}(B)\rightarrow H^{i+n}(E)\right).\] 
Taking the direct sum of these sequences leads to the short exact sequence
\[0\rightarrow \coker p^* \rightarrow H^*(B) \rightarrow \ker p^* \rightarrow 0.\]
Since $H^*(B)$ is finite-dimensional, we have 
\[\dim H^*(B)=\dim(\coker p^*)+\dim(\ker p^*)=\dim H^*(E)-\dim(\im p^*)+\dim(\ker p^*).\]
Finally, the well-known equality 
\[\dim H^*(B)=\dim(\ker p^*)+\dim(\im p^*)\] 
finishes the proof.
\end{proof}



\subsection{Gr\" obner bases}

Here we will give a short introduction to 
        Gr\"obner bases, but for the sake of concreteness we will work over the field $\mathbb Z_2$. For more details see \cite{Becker}.
        
        If $\mathbb Z_2[\overline X]$ is the polynomial ring over one or more variables, than the set of all monomials in $\mathbb Z_2[\overline X]$ will be denoted by $M$. Let $\preceq$ be a monomial ordering on $M$, that is, a well ordering on $M$ with the property that for all $m_1,m_2,m_3\in M$, $m_1\preceq m_2$ implies $m_1m_3\preceq m_2m_3$. For $f = \sum_{i=1}^r m_i\in\mathbb Z_2[\overline X]\setminus\{0\}$, where $m_i$ are distinct monomials,
        define the leading monomial $\mathrm{LM}(f)$ as $\max \{m_i\mid 1\leqslant i \leqslant r\}$, where the maximum is taken with respect to $\preceq$.       
        There are many equivalent ways to define Gr\" obner basis for an ideal (see \cite[Theorem 5.35, Proposition 5.38]{Becker}) and here we give one of them.
        
\begin{definition}
    Let $G \subset\mathbb Z_2[\overline X]\setminus\{0\}$ be a finite set of polynomials, and an $I \unlhd \mathbb\mathbb Z_2[\overline X]$ ideal such that $G\subset I$.
    We say $G$ is a {\it Gr\" obner basis} for $I$ w.r.t.\ $\preceq$
    if the set of cosets of monomials not divisible by any $\mathrm{LM}(f)$ for $f \in G$, comprises a vector space basis for the quotient $\mathbb Z_2[\overline X]/I$.
\end{definition}
    For  $f,g \in\mathbb Z_2[\overline X]\setminus\{0\}$, the $S$-polynomial is defined as
    $$
    S(f,g)=\frac{\mathrm{lcm}\left(\mathrm{LM}(f),\mathrm{LM}(g) \right)}{\mathrm{LM}(f)}\cdot f+
        \frac{\mathrm{lcm}\left(\mathrm{LM}(f),\mathrm{LM}(g) \right)}{\mathrm{LM}(g)}\cdot g,
    $$
    where $\mathrm{lcm}\left(\mathrm{LM}(f),\mathrm{LM}(g) \right)$ is the least common multiple of the leading monomials of $f$ and $g$. Note that $S(f,f)=0$ and $S(g,f)=S(f,g)$.
    
    We are also going to need the notion of reduction of polynomials. Let $G \subset\mathbb Z_2[\overline X]$ be a finite set of polynomials. We say that polynomial $p$ reduces to $q$ modulo $G$ if there exist $n\geqslant  1$ and polynomials $r_1,\ldots,r_n$ such that $r_1=p$, $r_n=q$ and for each 
    $i \in \{1,\ldots , n-1\}$ we have 
    \begin{equation*}
        r_{i+1}=r_i+m_if_i,
    \end{equation*}
    for some $f_i \in G$ and some monomials $m_i \in M$  such that $m_i\mathrm{LM}(f_i)=\mathrm{LM}(r_i)$, $1\leqslant i\leqslant n-1$.
    
    The following theorem gives us an equivalent condition for a set of polynomials to be a Gr\"obner basis,
    and this is what we are going to exploit in Section \ref{sec:main_proof}. It is also known as the Buchberger criterion (\cite[Theorem 5.48]{Becker}).
    \begin{theorem}\label{buchberger}
        Let $G \subset\mathbb Z_2[\overline X]$ be a finite set of nonzero polynomials, and let $I$ be the ideal generated by $G$. Then $G$ is a Gr\" obner basis for $I$ if and only if $S(f, g)$ reduces to zero modulo $G$ for every $f,g \in G$.
        \end{theorem}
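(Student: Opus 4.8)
The plan is to prove the two implications by passing through leading‑term ideals. For a finite set $G\subset\mathbb Z_2[\overline X]\setminus\{0\}$ write $\mathrm{LT}(G)$ for the monomial ideal $\langle\,\mathrm{LM}(g)\mid g\in G\,\rangle$, and for an ideal $I$ write $\mathrm{LT}(I)$ for the monomial ideal generated by $\{\mathrm{LM}(f)\mid f\in I\setminus\{0\}\}$. The first step is a bookkeeping lemma: if $G\subset I$, then $G$ is a Gr\"obner basis for $I$ in the sense of the Definition above \emph{if and only if} $\mathrm{LT}(I)=\mathrm{LT}(G)$. Indeed, if $\mathrm{LT}(I)=\mathrm{LT}(G)$ then the cosets of the monomials not lying in $\mathrm{LT}(G)$ span $\mathbb Z_2[\overline X]/I$ by iterated reduction (the well‑ordering $\preceq$ guarantees termination), and they are linearly independent because a nonzero $\mathbb Z_2$‑combination of such monomials lying in $I$ would have its leading monomial simultaneously outside $\mathrm{LT}(G)$ and inside $\mathrm{LT}(I)=\mathrm{LT}(G)$. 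Conversely, if $G$ is a Gr\"obner basis but some $p\in I\setminus\{0\}$ had $\mathrm{LM}(p)\notin\mathrm{LT}(G)$, pick such a $p$ with $\mathrm{LM}(p)$ minimal; fully reducing $p$ modulo $G$ only alters monomials strictly below $\mathrm{LM}(p)$, so it yields a nonzero polynomial in $I$, all of whose monomials are standard, contradicting linear independence in $\mathbb Z_2[\overline X]/I$.

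Granting this reformulation, the forward implication is immediate. Since $S(f,g)$ is a polynomial combination of $f,g\in G\subset I$, we have $S(f,g)\in I$. For a Gr\"obner basis, every nonzero element $q\in I$ has $\mathrm{LM}(q)\in\mathrm{LT}(I)=\mathrm{LT}(G)$, hence $\mathrm{LM}(q)$ is divisible by some $\mathrm{LM}(g)$; one reduction step replaces $q$ by $q+\frac{\mathrm{LM}(q)}{\mathrm{LM}(g)}g\in I$ with strictly smaller leading monomial, and by well‑foundedness of $\preceq$ this process terminates at $0$. In particular $S(f,g)$ reduces to zero modulo $G$.

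The content is the reverse implication. Assume every $S(f,g)$ reduces to zero modulo $G$; I would show $\mathrm{LT}(I)\subseteq\mathrm{LT}(G)$ (the other inclusion being trivial). Fix $p\in I\setminus\{0\}$ and, among all representations $p=\sum_i h_ig_i$ with $g_i\in G$, $h_i\neq0$, choose one minimizing $\delta:=\max_i\mathrm{LM}(h_i)\mathrm{LM}(g_i)$ with respect to $\preceq$. Then $\mathrm{LM}(p)\preceq\delta$, and if $\mathrm{LM}(p)=\delta$ then $\mathrm{LM}(p)$ is divisible by $\mathrm{LM}(g_i)$ for the index attaining the maximum, so we are done. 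Suppose $\mathrm{LM}(p)\prec\delta$ and set $T=\{i\mid\mathrm{LM}(h_i)\mathrm{LM}(g_i)=\delta\}$; comparing the coefficients of $\delta$ on both sides forces $|T|$ to be even (we are over $\mathbb Z_2$). The key lemma is that $\sum_{i\in T}\mathrm{LM}(h_i)\,g_i$ can be rewritten as a $\mathbb Z_2$‑combination of terms $c_{jk}\,S(g_j,g_k)$ with $j,k\in T$, each having leading monomial $\prec\delta$: pair up the even set $T$, and for a pair $i,i'$ note that $L:=\mathrm{lcm}(\mathrm{LM}(g_i),\mathrm{LM}(g_{i'}))$ divides $\delta$, so with $c=\delta/L$ a direct computation gives $\mathrm{LM}(h_i)g_i+\mathrm{LM}(h_{i'})g_{i'}=c\cdot S(g_i,g_{i'})$, whose leading monomial is $\prec cL=\delta$ because $\mathrm{LM}(S(g_i,g_{i'}))\prec L$. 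Each $S(g_i,g_{i'})$ reduces to zero modulo $G$, and unwinding the definition of reduction yields $S(g_i,g_{i'})=\sum_q a_qg_q$ with $\mathrm{LM}(a_qg_q)\preceq\mathrm{LM}(S(g_i,g_{i'}))$ for all $q$; multiplying by the monomial $c$ keeps all leading monomials $\prec\delta$. Substituting back into
\[
p=\sum_{i\in T}\mathrm{LM}(h_i)\,g_i+\sum_{i\in T}\bigl(h_i-\mathrm{LM}(h_i)\bigr)g_i+\sum_{i\notin T}h_ig_i,
\]
where the last two sums already consist of monomial multiples of the $g_i$ with leading monomial $\prec\delta$, produces a new representation of $p$ over $G$ with $\max$ strictly below $\delta$, contradicting minimality. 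Hence $\mathrm{LM}(p)=\delta$ always holds, $\mathrm{LT}(I)=\mathrm{LT}(G)$, and $G$ is a Gr\"obner basis.

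I expect the main obstacle to be the cancellation lemma together with the careful bookkeeping around the reduction relation: one must verify that ``$r$ reduces to zero modulo $G$'' produces an expansion $r=\sum_q a_qg_q$ in which no leading monomial $\mathrm{LM}(a_qg_q)$ exceeds $\mathrm{LM}(r)$ — this relies on $\mathbb Z_2$ coefficients, since the condition $m_i\mathrm{LM}(f_i)=\mathrm{LM}(r_i)$ then forces the leading monomial of $r_i$ to cancel exactly while only strictly smaller monomials are introduced — and that the pairing of the index set $T$ can be arranged so the estimate $\prec\delta$ holds for every resulting S‑polynomial term. Termination of the various reduction and minimization procedures is exactly where the well‑ordering hypothesis on $\preceq$ enters.
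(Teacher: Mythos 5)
The paper does not actually prove this statement---it is quoted directly from \cite[Theorem 5.48]{Becker}---so there is no in-paper argument to compare against; judged on its own, your proof is correct, and it is essentially the classical Buchberger-criterion argument adapted to the paper's conventions. Two points of comparison are worth recording. First, since the paper defines a Gr\"obner basis via the standard monomials giving a vector-space basis of $\mathbb Z_2[\overline X]/I$, your bridging lemma ($G$ is a Gr\"obner basis iff $\mathrm{LT}(I)=\mathrm{LT}(G)$) is genuinely needed and is proved correctly; the only wording to tighten is ``fully reducing $p$ modulo $G$'': the paper's reduction relation rewrites \emph{leading} monomials only, so in the converse direction you should say explicitly that you repeatedly cancel the largest non-standard monomial of $p$ (necessarily $\prec\mathrm{LM}(p)$, as $\mathrm{LM}(p)$ is standard), a process that terminates by the well-ordering and never disturbs $\mathrm{LM}(p)$; the minimality assumption on $\mathrm{LM}(p)$ is then superfluous. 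Second, in the reverse implication your characteristic-two pairing trick---$|T|$ is even, and for each pair the identity $\mathrm{LM}(h_i)g_i+\mathrm{LM}(h_{i'})g_{i'}=(\delta/L)\,S(g_i,g_{i'})$ holds exactly because all leading coefficients equal $1$---legitimately replaces the general cancellation lemma (telescoping sums of S-polynomials when the leading coefficients sum to zero) used over arbitrary fields, and your extraction of a representation $S=\sum_q a_qg_q$ with $\mathrm{LM}(a_qg_q)\preceq\mathrm{LM}(S)$ from the paper's definition of reduction is sound. In short: a correct proof, following the same overall strategy as the cited source, streamlined by the $\mathbb Z_2$ hypothesis.
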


\section{A generating set for the algebra $H^*(\widetilde G_{2^t,4})$}
\label{sec:Indecomposables}

In what follows, $t\geqslant3$ is a fixed integer. Following Basu and Chakraborty \cite{BasuChakraborty} we consider two sphere bundles with the same total space. The first one is the sphere bundle associated to the orthogonal complement of the canonical vector bundle $\wg_{2^t,3}$ over $\gr_{2^t,3}$. So the total space is
\[W_{3,1}^{2^t} = \{(P,v)\in\gr_{2^t,3}\times S^{2^t-1} \mid v\perp P\},\]
and the bundle is $S^{2^t-4}\xrightarrow{i} W_{3,1}^{2^t} \xrightarrow{p_1} \gr_{2^t,3}$, where $p_1$ is the projection onto the first coordinate.

The second one is $S^3\rightarrow W_{3,1}^{2^t} \xrightarrow{sp} \gr_{2^t,4}$, where $sp$ sends a pair of an oriented 3-dimensional subspace in $\mathbb R^{2^t}$ and a vector orthogonal to it to the oriented 4-dimensional subspace they span.




\subsection{Generators for $H^*(W_{3,1}^{2^t})$} 

The $E_2$ page of the Leray--Serre spectral sequence associated to the sphere bundle $S^{2^t-4}\xrightarrow{i}W_{3,1}^{2^t}\xrightarrow{p_1} \gr_{2^t,3}$  is concentrated in two rows, and the only differential that can be nontrivial is $d_{2^t-3}$. We know that $d_{2^t-3}$ is the multiplication with the mod $2$ Euler class of the bundle $\wg_{2^t,3}^\perp$, which is equal to the top Stiefel--Whitney class $w_{2^t-3}(\wg_{2^t,3}^\perp)$ of this bundle. On the other hand, this class corresponds to (the coset of) $g_{2^t-3}$ via (\ref{imp*}) for $k=3$. It is a well-known fact that $g_{2^t-3}=0$ (see \cite[Lemma 2.3(i)]{Korbas:Osaka}). We conclude that $w_{2^t-3}(\wg_{2^t,3}^\perp)=0$, and so the spectral sequence collapses at the $E_2$-page. Thus, 
\[E_\infty=E_2\cong H^*(\gr_{2^t,3})\otimes H^*(S^{2^t-4}).\]
Therefore, since the spectral sequence converges to $H^*(W_{3,1}^{2^t})$, we can choose a class $c_{2^t-4}\in H^{2^t-4}(W_{3,1}^{2^t})$ with the property $i^*(c_{2^t-4})=s$, where $s\in H^{2^t-4}(S^{2^t-4})$ is the generator. Also, by \cite[Theorem 1.1]{CP}, $H^*(\gr_{2^t,3})\cong\im p^*\otimes\Lambda(\wa_{2^t-1})$, leading to the conclusion that $E_\infty$ is generated (as a bigraded algebra) by $\w_2\otimes 1$, $\w_3\otimes 1$, $\wa_{2^t-1}\otimes 1$ and $1\otimes s$. This means that, if we define 
\begin{equation}\label{defw}
  \ww_2:=p_1^*(\w_2),\, \ww_3:=p_1^*(\w_3)\, \mbox{ and }\, \mathrm a_{2^t-1}:=p_1^*(\wa_{2^t-1}),  
\end{equation} 
then $H^*(W_{3,1}^{2^t})$ is generated (as a graded algebra) by $\ww_2$, $\ww_3$, $\mathrm{a}_{2^t-1}$ and $c_{2^t-4}$ (cf.\ \cite[Example 1.K]{McCleary}). Hence, we have proved the following proposition.





\begin{proposition}\label{cohW}
    The previously defined classes $\ww_2\in H^2(W_{3,1}^{2^t})$, $\ww_3\in H^3(W_{3,1}^{2^t})$, $c_{2^t-4}\in H^{2^t-4}(W_{3,1}^{2^t})$ and $\mathrm{a}_{2^t-1}\in H^{2^t-1}(W_{3,1}^{2^t})$ constitute a set of algebra generators for $H^*(W_{3,1}^{2^t})$.
\end{proposition}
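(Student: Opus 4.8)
The plan is to bootstrap the generation statement from the $E_\infty$-page of the spectral sequence, where it has effectively already been verified, up to $H^*(W_{3,1}^{2^t})$ itself, via the standard leading-term argument for multiplicative spectral sequences. Recall that the Leray--Serre spectral sequence of $S^{2^t-4}\xrightarrow{i}W_{3,1}^{2^t}\xrightarrow{p_1}\gr_{2^t,3}$ equips each $H^n(W_{3,1}^{2^t})$ with a finite decreasing filtration $H^n=F^0H^n\supseteq F^1H^n\supseteq\cdots\supseteq F^{n+1}H^n=0$ that is multiplicative, $F^pH^m\cdot F^{p'}H^{m'}\subseteq F^{p+p'}H^{m+m'}$, and whose associated graded is $E_\infty$ as a bigraded algebra, $F^pH^{p+q}/F^{p+1}H^{p+q}\cong E_\infty^{p,q}$. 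We have already seen that this spectral sequence collapses, so $E_\infty=E_2\cong H^*(\gr_{2^t,3})\otimes H^*(S^{2^t-4})$ as bigraded algebras, and that the latter is generated as such by $\w_2\otimes1$, $\w_3\otimes1$, $\wa_{2^t-1}\otimes1$ (because $H^*(\gr_{2^t,3})\cong\im p^*\otimes\Lambda(\wa_{2^t-1})$ is generated by $\w_2,\w_3,\wa_{2^t-1}$) and $1\otimes s$.

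First I would observe that each of these four bigraded generators is represented by one of our four classes. As $p_1^*$ factors as the edge map $H^p(\gr_{2^t,3})=E_2^{p,0}\twoheadrightarrow E_\infty^{p,0}$ (an isomorphism, by collapse) followed by the inclusion $E_\infty^{p,0}=F^pH^p(W_{3,1}^{2^t})\hookrightarrow H^p(W_{3,1}^{2^t})$, the definitions in (\ref{defw}) show that $\ww_2$, $\ww_3$, $\mathrm a_{2^t-1}$ represent $\w_2\otimes1$, $\w_3\otimes1$, $\wa_{2^t-1}\otimes1$; and $c_{2^t-4}$ represents $1\otimes s$ because $i^*(c_{2^t-4})=s$ while $i^*$ is, up to the inclusion $E_\infty^{0,\bullet}\hookrightarrow H^\bullet(S^{2^t-4})$, the quotient map $H^{2^t-4}(W_{3,1}^{2^t})\twoheadrightarrow H^{2^t-4}/F^1$. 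I would then prove, by downward induction on $p$ (the case $F^{n+1}H^n=0$ being vacuous), that $F^pH^n(W_{3,1}^{2^t})$ lies in the subalgebra $A$ generated by $\ww_2,\ww_3,c_{2^t-4},\mathrm a_{2^t-1}$: given $y\in F^pH^n$, its image in $E_\infty^{p,n-p}$ is a polynomial in the four bigraded generators --- trivially so, since $E_\infty$ is concentrated in the two rows $q\in\{0,2^t-4\}$, the image being $\bar z\otimes1$ or $\bar z\otimes s$ for a polynomial $\bar z$ in $\w_2,\w_3,\wa_{2^t-1}$ --- and the corresponding polynomial $\Phi$ in the lifts lies in $A\cap F^pH^n$ with the same image as $y$ in $E_\infty^{p,n-p}$, by multiplicativity of the filtration and the isomorphism $\mathrm{gr}\,H^*\cong E_\infty$ of algebras. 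Hence $y-\Phi\in F^{p+1}H^n\subseteq A$, so $y\in A$; the case $p=0$ is the desired conclusion.

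I do not expect a real obstacle here: the argument is formal once the collapse and the structure of $E_\infty$ are in hand. The one point to be treated with care is the compatibility invoked above --- that the spectral sequence filtration on $H^*(W_{3,1}^{2^t})$ is multiplicative with associated graded algebra $E_\infty$, so that a product of representatives of $x\in E_\infty^{p,q}$ and $x'\in E_\infty^{p',q'}$ represents $xx'$ in $E_\infty^{p+p',q+q'}$; this is what converts $E_\infty$-generators into genuine algebra generators of $H^*(W_{3,1}^{2^t})$, and it is standard (see \cite[Example 1.K]{McCleary}). Notably, nothing about the squares $c_{2^t-4}^2$ or $\mathrm a_{2^t-1}^2$, nor about how often $1\otimes s$ can appear in a monomial, is needed, since only the already-established fact that the four elements generate $E_\infty$ enters the argument.
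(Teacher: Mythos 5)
Your proposal is correct and follows essentially the same route as the paper: the paper likewise deduces the proposition from the collapse of the spectral sequence, the generators of $E_\infty\cong H^*(\widetilde G_{2^t,3})\otimes H^*(S^{2^t-4})$, and the standard fact that representatives of bigraded algebra generators of $E_\infty$ generate $H^*(W_{3,1}^{2^t})$, citing \cite[Example 1.K]{McCleary} for exactly the multiplicative-filtration argument you write out by downward induction. Your version simply makes that formal lifting step explicit rather than quoting the reference.
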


Since $\wa_{2^t-1}^2=0$ in $H^*(\gr_{2^t,3})$, note that we have
\begin{equation}\label{a^2=0}
    \mathrm{a}_{2^t-1}^2 = p_1^*(\wa_{2^t-1}^2) = 0.
\end{equation}


\subsection{Generators for $H^*(\gr_{2^t,4})$}

Now we move onto determining a generating set for $H^*(\gr_{2^t,4})$. Consider the second sphere bundle $S^3\rightarrow W_{3,1}^{2^t}\xrightarrow{sp} \widetilde G_{2^t,4}$. First, let us note that the total space $W_{3,1}^{2^t}$ is a closed (simply) connected manifold of dimension $2^{t+2}-13$, because both the base space and the fiber are closed (simply) connected manifolds of dimensions $4\cdot2^t-16$ and $3$ respectively. Therefore, the Poincar\'e duality applies to $H^*(W_{3,1}^{2^t})$.

The Leray--Serre spectral sequence associated to the bundle converges to the graded algebra $H^*(W_{3,1}^{2^t})$. Therefore, there exists a stable filtration $\{F_s^r\}_{r,s\geqslant0}$ of $H^*(W_{3,1}^{2^t})$, 
\[H^r(W_{3,1}^{2^t})=F_0^r\supseteq F_1^r\supseteq\cdots\supseteq F_{r-1}^r\supseteq F_r^r\supseteq F_{r+1}^r=F_{r+2}^r=\cdots=0 \,\mbox{ for each } r,\]
such that the induced bigraded algebra $\displaystyle\bigoplus_{p,q\geqslant0}F_p^{p+q}/F_{p+1}^{p+q}$ is isomorphic to the bigraded algebra $E_\infty=\displaystyle\bigoplus_{p,q\geqslant0}E_\infty^{p,q}$ coming from the spectral sequence. 

The total space $W_{3,1}^{2^t}$ is easily identified with the space $\{(V,v)\in\gr_{2^t,4}\times S^{2^t-1}\mid v\in V\}$ (namely, the point $(P,v)\in W_{3,1}^{2^t}$ corresponds to the point $(sp(P,v),v)$ of the latter space), and this is the space of unit vectors in the total space of the canonical bundle $\wg_{2^t,4}$. Therefore, the bundle $S^3\rightarrow W_{3,1}^{2^t}\xrightarrow{sp} \widetilde G_{2^t,4}$ is actually the sphere bundle associated to $\wg_{2^t,4}$. We conclude that the Euler class of this bundle is $w_4(\wg_{2^t,4})=\w_4$, which means that the only (possibly) nontrivial differential $d_4$ in the spectral sequence comes down to the multiplication by $\w_4$. 
This means that
\begin{equation}\label{kersp^*}
 \ker sp^*=\w_4 H^*(\gr_{2^t,4}).   
\end{equation}
In other words, a class in $H^r(\gr_{2^t,4})\cong H^r(\gr_{2^t,4})\otimes H^0(S^3)\cong E_2^{r,0}$ survives until $E_\infty^{r,0}$ if and only if it is not divisible by $\w_4$. Also, the following lemma essentially states that $E_\infty^{r,3}=0$ for $r<2^t-4$ (see Figure \ref{fig:sn}). As a consequence, we have
\begin{equation}\label{F_r^r}
 H^r(W_{3,1}^{2^t})=F_0^r=F_r^r \, \mbox{ for } r<2^t-1.  
\end{equation}

\begin{lemma}\label{d_4mono}
 The multiplication by $\w_4$, $H^r(\gr_{2^t,4})\xrightarrow{\cdot\w_4}H^{r+4}(\gr_{2^t,4})$, is a mono\-morphism if $r<2^t-4$.
\end{lemma}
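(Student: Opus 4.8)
The plan is to translate the claim about multiplication by $\w_4$ into a statement about $\im p^*\cong H^*(G_{2^t,4})$ combined with the known structure of $H^*(\gr_{2^t,3})$, and then to compare dimensions degree by degree. First I would observe that the sphere bundle $S^3\to W_{3,1}^{2^t}\xrightarrow{sp}\gr_{2^t,4}$ has Euler class $\w_4$, so by the Gysin sequence (exactly as in Lemma \ref{1/2dim}) multiplication by $\w_4$ on $H^*(\gr_{2^t,4})$ has kernel equal to $\ker sp^*$ and its cokernel injects into $H^*(W_{3,1}^{2^t})$. Thus $\cdot\w_4\colon H^r(\gr_{2^t,4})\to H^{r+4}(\gr_{2^t,4})$ is a monomorphism precisely when $H^r(\gr_{2^t,4})\cap\ker sp^*=0$, i.e.\ when nothing in degree $r$ is divisible by $\w_4$ — but that is circular, so instead I would argue via the other sphere bundle.

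The cleaner route: use the first bundle $S^{2^t-4}\xrightarrow{i}W_{3,1}^{2^t}\xrightarrow{p_1}\gr_{2^t,3}$, whose spectral sequence collapses, to get that in degrees $r<2^t-4$ the map $p_1^*\colon H^r(\gr_{2^t,3})\to H^r(W_{3,1}^{2^t})$ is an isomorphism (the fiber $S^{2^t-4}$ contributes nothing below degree $2^t-4$), and in degree $2^t-4$ it is injective with cokernel of dimension $1$ (spanned by $c_{2^t-4}$). Combined with \eqref{F_r^r}, this pins down $\dim H^r(W_{3,1}^{2^t})=\dim H^r(\gr_{2^t,3})$ for $r<2^t-4$. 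On the other hand, from the second bundle, the Gysin sequence gives, for each $r$,
\[
\dim H^r(\gr_{2^t,4})=\dim\ker\bigl(\cdot\w_4\colon H^r\to H^{r+4}\bigr)+\dim\im\bigl(\cdot\w_4\colon H^{r-4}\to H^r\bigr)+\bigl(\text{contribution from }H^{r}(W_{3,1}^{2^t})\text{ in fiber-degree }3\bigr).
\]
The point of Lemma \ref{d_4mono} is exactly that for $r<2^t-4$ there is no contribution from fiber degree $3$, so the Gysin sequence degenerates into short exact sequences $0\to H^{r-4}(\gr_{2^t,4})\xrightarrow{\cdot\w_4}H^r(\gr_{2^t,4})\to H^r(W_{3,1}^{2^t})\to0$ — but to get this I cannot assume the conclusion. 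So the actual argument should run: show $E_\infty^{r,3}=0$ for $r<2^t-4$ directly.

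To show $E_\infty^{r,3}=0$ for $r<2^t-4$ I would argue that $E_2^{r,3}=H^r(\gr_{2^t,4})\otimes H^3(S^3)\cong H^r(\gr_{2^t,4})$ and the only differential out of or into the bottom two relevant rows is $d_4\colon E_4^{r-4,3}\to E_4^{r,0}$, i.e.\ $E_\infty^{r,3}=\ker\bigl(\cdot\w_4\colon H^r(\gr_{2^t,4})\to H^{r+4}(\gr_{2^t,4})\bigr)$ — again the same kernel. The way out of the apparent circularity is to use Poincaré duality on the closed manifold $W_{3,1}^{2^t}$ of dimension $2^{t+2}-13$ together with a dimension count: I would compare $\dim H^*(\gr_{2^t,4})$ (which is even, and whose total I can estimate) against $\dim H^*(W_{3,1}^{2^t})=\dim H^*(\gr_{2^t,3})\cdot\dim H^*(S^3)$ and use that the Gysin sequence forces the sum $\sum_r\dim E_\infty^{r,3}$ (over all $r$) to be determined; concretely, multiplication by $\w_4$ is injective on $H^r(\gr_{2^t,4})$ for $r<2^t-4$ if and only if no indecomposable "anomalous" class appears below degree $2^t-4$, and the collapse of the first spectral sequence shows $H^r(W_{3,1}^{2^t})$ for $r\le 2^t-4$ is generated by $\ww_2,\ww_3$ alone (with $c_{2^t-4}$ entering only in degree $2^t-4$ and $\mathrm a_{2^t-1}$ only in degree $2^t-1$), hence $\ker sp^*\cap H^r(\gr_{2^t,4})$ must vanish for $r<2^t-4$ because any such kernel element would, via \eqref{kersp^*}, be $\w_4$ times a class and so $sp^*$ would be non-injective in a range where $H^*(W_{3,1}^{2^t})$ is already accounted for by $\im sp^*=\im p_1^*$ pulled back appropriately.

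I expect the main obstacle to be making the last dimension-count rigorous without already knowing Theorem \ref{Cohomology_G_2^t,4}: one must show that the image of $sp^*$ exhausts $H^r(W_{3,1}^{2^t})$ for $r<2^t-4$, equivalently that $c_{2^t-4}$ and $\mathrm a_{2^t-1}$ generate nothing new below degree $2^t-4$, which follows from Proposition \ref{cohW} and a degree/grading argument, but one must be careful that products like $\ww_2^a\ww_3^b$ can in principle coincide with $sp^*$ of an anomalous class. The honest fix is: the classes $\ww_2=sp^*(\w_2)$, $\ww_3=sp^*(\w_3)$ (and $sp^*(\w_4)=0$) lie in $\im sp^*$; since $H^{<2^t-4}(W_{3,1}^{2^t})$ is spanned by monomials in $\ww_2,\ww_3$ by Proposition \ref{cohW}, we get $\im sp^*=H^r(W_{3,1}^{2^t})$ for $r<2^t-4$, whence $sp^*$ is surjective there, so $\coker(sp^*)$ vanishes in those degrees, so by the Gysin sequence $E_\infty^{r-1,3}=\ker(\cdot\w_4\colon H^{r-1}\to H^{r+3})=0$ for $r-1<2^t-4$ — this is exactly the statement that $\cdot\w_4$ is injective on $H^{r'}(\gr_{2^t,4})$ for $r'<2^t-4$, which is the lemma. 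So the real content is the surjectivity of $sp^*$ in low degrees, and that is where I would spend the effort.
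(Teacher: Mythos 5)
Your final ``honest fix'' has a genuine gap, and it sits exactly at the degrees where the lemma is delicate. From the Gysin sequence of $S^3\rightarrow W_{3,1}^{2^t}\xrightarrow{sp}\gr_{2^t,4}$, surjectivity of $sp^*$ in degree $d$ kills $E_\infty^{d-3,3}$, i.e.\ it gives injectivity of $\cdot\,\w_4$ on $H^{d-3}(\gr_{2^t,4})$ -- not on $H^{d-1}$ as in your indexing ``$E_\infty^{r-1,3}=0$ for $r-1<2^t-4$''. Your degree argument from Proposition \ref{cohW} gives surjectivity of $sp^*$ only in degrees $d<2^t-4$ (and also $d=2^t-3$, where $c_{2^t-4}$ and $\mathrm a_{2^t-1}$ still cannot contribute), hence injectivity of $\cdot\,\w_4$ only for $r\leqslant 2^t-8$ and $r=2^t-6$. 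The cases $r=2^t-7$ and $r=2^t-5$ need surjectivity of $sp^*$ in degrees $2^t-4$ and $2^t-2$, which amounts to showing $c_{2^t-4}\in\im sp^*$ (mod polynomials in $\ww_2,\ww_3$). That is not available at this stage: indeed $\coker\big(sp^*\colon H^{2^t-4}(\gr_{2^t,4})\rightarrow H^{2^t-4}(W_{3,1}^{2^t})\big)\cong E_\infty^{2^t-7,3}=\ker\big(\cdot\,\w_4\colon H^{2^t-7}\rightarrow H^{2^t-3}\big)$, so proving $c_{2^t-4}\in\im sp^*$ \emph{is} the case $r=2^t-7$ of the lemma -- your argument is circular precisely there, and no amount of bookkeeping with the two spectral sequences alone can break this, because some external input locating the first anomalous class is required. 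A smaller issue: you use $sp^*(\w_2)=\ww_2$ and $sp^*(\w_3)=\ww_3$, which in the paper appear as (\ref{w_i}) only \emph{after} (and as a consequence of) this lemma; they can be obtained independently from the bundle isomorphism $sp^*\wg_{2^t,4}\cong p_1^*\wg_{2^t,3}\oplus\varepsilon^1$, but you would have to say so.

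For contrast, the paper's proof supplies exactly the missing input and is purely algebraic: by \cite[Theorem 2.1(2)]{Korbas:Osaka} every class in $H^r(\gr_{2^t,4})$ with $r<2^t-4$ lies in $\im p^*\cong\mathbb Z_2[w_2,w_3,w_4]/(g_{2^t-2},g_{2^t-1},g_{2^t})$, and for degree reasons the only nonzero homogeneous elements of this ideal in cohomological degrees below $2^t$ are $g_{2^t-2}$ and $g_{2^t-1}$, neither of which is divisible by $w_4$ (they contain the monomials $w_2^{2^{t-1}-1}$ and $w_2^{2^{t-1}-2}w_3$); hence $w_4f$ cannot lie in the ideal in those degrees unless $f$ does. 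If you want to keep your topological framing, you would still need to import Korba\v s's characteristic rank result (or an equivalent statement about where the first class outside $\im p^*$ can occur) to handle $r=2^t-7$ and $r=2^t-5$; at that point the paper's direct argument is shorter.
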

\begin{proof}
    By \cite[Theorem 2.1(2)]{Korbas:Osaka}, for $r<2^t-4$ every class in $H^r(\gr_{2^t,4})$ is a polynomial in Stiefel--Whitney classes $\w_2$, $\w_3$ and $\w_4$. On the other hand, according to (\ref{imp*}) the subalgebra $\im p^*$ (generated by these three Stiefel--Whitney classes) is isomorphic to the quotient $\mathbb Z_2[w_2,w_3,w_4]/(g_{2^t-2},g_{2^t-1},g_{2^t})$ ($g_{2^t-3}=0$ by \cite[Lemma 2.3(ii)]{Korbas:Osaka}). So in order to prove that $\w_4x\neq0$ if $x\in H^r(\gr_{2^t,4})\setminus\{0\}$ and $r<2^t-4$, it suffices to verify that the ideal $(g_{2^t-2},g_{2^t-1},g_{2^t})$ contains no polynomial divisible by $w_4$ whose cohomological degree is less than $2^t$. However, this is straightforward from the fact that neither $g_{2^t-2}$ nor $g_{2^t-1}$ is divisible by $w_4$. This fact follows from (\ref{eq_explicitly_g}). Namely, the monomial $w_2^{2^{t-1}-1}$ appears in $g_{2^t-2}$ with nonzero coefficient, and $w_2^{2^{t-1}-2}w_3$ appears in $g_{2^t-1}$ with nonzero coefficient.
\end{proof}

 For a fixed integer $r\geqslant0$ the image of the map $sp^*:H^r(\gr_{2^t,4})\rightarrow H^r(W_{3,1}^{2^t})$ is equal to $F_r^r$. If a class in $H^r(\gr_{2^t,4})$ is decomposable, i.e., if it is a polynomial in classes of strictly smaller degree, then applying $sp^*$ we get a polynomial in classes from $F_i^i$ for $0<i<r$. Consequently, $sp^*$ induces an epimorphism
\begin{equation}\label{eq:izo-nerastavljivi}
H^r(\gr_{2^t,4})/D(H^r(\gr_{2^t,4}))\longrightarrow F_r^r/D(F_r^r),
\end{equation}
where $D(H^r(\gr_{2^t,4}))$ is the image of the cup product map 
\[\bigoplus_{i=1}^{r-1}H^i(\gr_{2^t,4})\otimes H^{r-i}(\gr_{2^t,4})\xrightarrow{\smile}H^r(\gr_{2^t,4}),\]
and $D(F_r^r)$ is the image of 
$\displaystyle\bigoplus_{i=1}^{r-1}F_i^i\otimes F_{r-i}^{r-i}\xrightarrow{\smile}F_r^r.$
\begin{lemma}\label{izo-nerastavljivi}
    The epimorphism (\ref{eq:izo-nerastavljivi}) is an isomorphism if $r>4$.
\end{lemma}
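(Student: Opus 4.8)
The epimorphism in \eqref{eq:izo-nerastavljivi} is already established, so the plan is to show it is also a monomorphism, i.e.\ that every $x\in H^r(\gr_{2^t,4})$ with $sp^*(x)\in D(F_r^r)$ is already decomposable in $H^r(\gr_{2^t,4})$ when $r>4$. First I would unwind what $D(F_r^r)$ looks like: since $sp^*:H^i(\gr_{2^t,4})\twoheadrightarrow F_i^i$ is surjective for every $i$ (by the description of $F_i^i$ as the image of $sp^*$), and $F_i^i$ for $i<2^t-1$ coincides with all of $H^i(W_{3,1}^{2^t})$ by \eqref{F_r^r}, any product $F_i^i\cdot F_{r-i}^{r-i}$ with $0<i<r$ is the image under $sp^*$ of a product of classes of strictly smaller degree. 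Hence $D(F_r^r)=sp^*\bigl(D(H^r(\gr_{2^t,4}))\bigr) + \bigl(D(F_r^r)\cap(\text{contributions from }F_i^i\text{ with }i\geqslant 2^t-1)\bigr)$; for $r$ in the relevant range the second part is controlled, and in fact for $r<2^t-1$ it vanishes, so $D(F_r^r)=sp^*(D(H^r(\gr_{2^t,4})))$ there.

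With that identification in hand, suppose $sp^*(x)\in D(F_r^r)=sp^*(D(H^r(\gr_{2^t,4})))$; pick a decomposable $y\in D(H^r(\gr_{2^t,4}))$ with $sp^*(x)=sp^*(y)$. Then $x+y\in\ker sp^* = \w_4 H^*(\gr_{2^t,4})$ by \eqref{kersp^*}, so $x = y + \w_4 z$ for some $z\in H^{r-4}(\gr_{2^t,4})$. Since $r>4$, the class $\w_4 z$ is a product of $\w_4$ (degree $4>0$) with $z$ (degree $r-4>0$), hence decomposable; and $y$ is decomposable by construction. Therefore $x$ is decomposable, which is exactly what injectivity of \eqref{eq:izo-nerastavljivi} asserts.

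The only genuine subtlety — and the step I would expect to be the main obstacle — is the range bookkeeping for $D(F_r^r)$: one must make sure that when $r\geqslant 2^t-1$ the factors $F_i^i$ appearing in products need not be all of $H^i(W_{3,1}^{2^t})$ (because \eqref{F_r^r} only applies for $i<2^t-1$), so the clean identity $D(F_r^r)=sp^*(D(H^r))$ could a priori fail, and one has to argue that the extra contributions are still hit by $sp^*$ of decomposables. Here I would use that $sp^*$ is surjective in each degree together with the multiplicativity of the filtration ($F_i^i\cdot F_j^j\subseteq F_{i+j}^{i+j}$), which shows that $sp^*(ab)=sp^*(a)sp^*(b)\in D(F_r^r)$ for any $a\in H^i$, $b\in H^{r-i}$ with $0<i<r$, and conversely every generator of $D(F_r^r)$ arises this way; so in fact $D(F_r^r)=sp^*(D(H^r(\gr_{2^t,4})))$ holds in all degrees, and the argument of the previous paragraph goes through verbatim. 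The hypothesis $r>4$ is used precisely once, to guarantee that a $\w_4$-multiple in degree $r$ is a product of two \emph{positive}-degree classes.
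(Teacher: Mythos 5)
Your proof is correct and is essentially the paper's own argument: you lift each product in $D(F_r^r)$ through $sp^*$ using the fact that $F_i^i$ is the image of $sp^*$ in degree $i$, obtain $sp^*(x)=sp^*(y)$ with $y$ decomposable, and then use $\ker sp^*=\w_4H^{*}(\gr_{2^t,4})$ together with $r>4$ to conclude that $x=y+\w_4z$ is decomposable. The ``range bookkeeping'' worry is a non-issue (and the appeal to (\ref{F_r^r}) is not needed), since $F_i^i=\im\bigl(sp^*:H^i(\gr_{2^t,4})\to H^i(W_{3,1}^{2^t})\bigr)$ holds in every degree by definition, as you yourself note at the end.
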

\begin{proof}
    The proof is the same as the proof of \cite[Lemma 3.5]{BasuChakraborty}. If $x\in H^r(\gr_{2^t,4})$ is such that $sp^*(x)\in D(F_r^r)$, we need to prove that $x\in D(H^r(\gr_{2^t,4}))$, i.e., that $x$ is decomposable. We have $sp^*(x)=\sum_ja_jb_j$, where $a_j$ and $b_j$ are some classes in the groups $F_i^i$ for $0<i<r$, and so
    \[sp^*(x)=\sum_jsp^*(\alpha_j)sp^*(\beta_j)=sp^*\bigg(\sum_j\alpha_j\beta_j\bigg),\]
    for some classes $\alpha_j$ and $\beta_j$ from the groups $H^i(\gr_{2^t,4})$ for $0<i<r$. In other words, $sp^*(x)=sp^*(y)$ for some $y\in D(H^r(\gr_{2^t,4}))$. Now, $x-y\in\ker sp^*=\w_4H^{r-4}(\gr_{2^t,4})$ (see (\ref{kersp^*})). Therefore, there is a class $z\in H^{r-4}(\gr_{2^t,4})$ such that $x=y+\w_4z$, and $y+\w_4z\in D(H^r(\gr_{2^t,4}))$ because $r-4>0$.
\end{proof}

According to \cite[Theorem 2.1(2)]{Korbas:Osaka} the smallest $r$ such that $H^r(\gr_{2^t,4})\setminus\im p^*\neq\emptyset$ is $2^t-4$. Let us choose a class
\begin{equation}\label{indec}
\wa_{2^t-4}\in H^{2^t-4}(\gr_{2^t,4})\setminus\im p^*.
\end{equation}
Since $2^t-4$ is the smallest cohomological degree with such a class, $\wa_{2^t-4}$ is indecomposable, i.e., its coset in $H^{2^t-4}(\gr_{2^t,4})/D(H^{2^t-4}(\gr_{2^t,4}))$ is nonzero.

\begin{theorem}\label{generators}
     The Stiefel--Whitney classes $\w_2$, $\w_3$, $\w_4$, along with the class $\wa_{2^t-4}$, generate the algebra $H^*(\widetilde G_{2^t,4})$.
\end{theorem}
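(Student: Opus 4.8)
The plan is to combine Proposition~\ref{cohW} (the four algebra generators $\ww_2,\ww_3,c_{2^t-4},\mathrm a_{2^t-1}$ of $H^*(W_{3,1}^{2^t})$) with the structural information about $sp^*$ encoded in Lemmas~\ref{d_4mono} and~\ref{izo-nerastavljivi}. First I would observe that it suffices to show every indecomposable class of $H^*(\gr_{2^t,4})$ in each degree $r$ lies in the subalgebra generated by $\w_2,\w_3,\w_4,\wa_{2^t-4}$; since $H^*(\gr_{2^t,4})$ is finite-dimensional, an easy induction on $r$ then finishes the job. The degrees $r\le4$ are handled by hand: $H^r(\gr_{2^t,4})$ for small $r$ is spanned by polynomials in $\w_2,\w_3,\w_4$ by \cite[Theorem 2.1(2)]{Korbas:Osaka} (as $2^t-4>4$ for $t\ge3$), so there is nothing to prove there.

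For $r>4$, the key tool is the isomorphism~(\ref{eq:izo-nerastavljivi}) of Lemma~\ref{izo-nerastavljivi}: the indecomposables of $H^r(\gr_{2^t,4})$ are identified with $F_r^r/D(F_r^r)$, i.e.\ with the indecomposables of the image $F_r^r=\im sp^*$ inside $H^*(W_{3,1}^{2^t})$. So I would analyze which generators of $H^*(W_{3,1}^{2^t})$ can contribute a new indecomposable in $F_r^r$. By~(\ref{defw}), $\ww_2=p_1^*(\w_2)$ and $\ww_3=p_1^*(\w_3)$ are $sp^*$-images of $\w_2$ and $\w_3$ (one must check $sp^*(\w_2)=\ww_2$, $sp^*(\w_3)=\ww_3$ using the identification of $W_{3,1}^{2^t}$ with the sphere bundle of $\wg_{2^t,4}$ and naturality of Stiefel--Whitney classes: $sp^*\wg_{2^t,4}$ is, up to a trivial line summand coming from the tautological vector $v$, the bundle $\wg_{2^t,3}$ pulled back by $p_1$). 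Similarly $\w_4=sp^*$ of nothing new — rather, $\w_4$ is the Euler class and $\ker sp^*=\w_4 H^*(\gr_{2^t,4})$, so $\w_4$ does not even survive to $E_\infty^{*,0}$; but $\w_4$ is already on our list of generators. Thus the only generators of $H^*(W_{3,1}^{2^t})$ not visibly coming from $\im p^*$ via $sp^*$ are $c_{2^t-4}$ and $\mathrm a_{2^t-1}$, and these live in degrees $2^t-4$ and $2^t-1$.

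The crux is then to show that no indecomposable of $H^*(\gr_{2^t,4})$ appears in any degree other than $2$, $3$, $4$ and $2^t-4$. Here I would argue degree by degree, using~(\ref{F_r^r}) and the fact that for $r<2^t-1$ the filtration is trivial ($F_r^r=H^r(W_{3,1}^{2^t})$), so $\im sp^*$ in those degrees is all of $H^r(W_{3,1}^{2^t})$, which by Proposition~\ref{cohW} is generated by $\ww_2,\ww_3,c_{2^t-4}$ (the class $\mathrm a_{2^t-1}$ being in too high a degree and $\mathrm a_{2^t-1}^2=0$ by~(\ref{a^2=0})). Consequently, for $5\le r<2^t-1$, $F_r^r/D(F_r^r)$ is zero except when $r=2^t-4$, where it is spanned by the class of $c_{2^t-4}$; and one must verify $sp^*(\wa_{2^t-4})$ is indecomposable in $H^*(W_{3,1}^{2^t})$ — equivalently maps to $c_{2^t-4}$ modulo decomposables — which holds because $\wa_{2^t-4}\notin\im p^*$ forces $sp^*(\wa_{2^t-4})\notin(\ww_2,\ww_3)$, hence it must involve $c_{2^t-4}$. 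For $r\ge 2^t-1$ the filtration is no longer trivial and $F_r^r=\im sp^*$ is a proper subspace; but any class there, being in the image of $sp^*$, is still a polynomial in $sp^*(\w_2),sp^*(\w_3),c_{2^t-4},\mathrm a_{2^t-1}$ restricted to $F_r^r$, and I would show each of $\ww_2,\ww_3$ is $sp^*$ of $\w_2,\w_3$, while $c_{2^t-4}\in F_{2^t-4}^{2^t-4}=\im sp^*$ is $sp^*(\wa_{2^t-4}+(\text{decomposable}))$ and $\mathrm a_{2^t-1}$, if it lies in $\im sp^*$, is likewise hit by $H^*(\gr_{2^t,4})$ — and in any case contributes nothing new as it is a product.

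The main obstacle I expect is the bookkeeping around the generator $\mathrm a_{2^t-1}$ and the non-trivial filtration for $r\ge 2^t-1$: one has to be careful that $sp^*$ is surjective onto $F_r^r$ by definition, so \emph{every} class of $H^*(\gr_{2^t,4})$ in degree $r$ already equals $sp^*$ of... no, that's backwards — rather, $\im sp^* = F_r^r$ as a \emph{subspace} of $H^r(W_{3,1}^{2^t})$, and to pull a decomposition in $W_{3,1}^{2^t}$ back to a polynomial in $\w_2,\w_3,\w_4,\wa_{2^t-4}$ in $\gr_{2^t,4}$ one uses precisely the isomorphism of Lemma~\ref{izo-nerastavljivi} at each degree together with $\ker sp^*=\w_4 H^*$. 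So the real content is: $\im sp^*$ is generated by $sp^*(\w_2),sp^*(\w_3),c_{2^t-4}$ (this is Proposition~\ref{cohW} in degrees $<2^t-1$, plus the easy high-degree tail where $\mathrm a_{2^t-1}$ enters only through products with the previous three, using $\mathrm a_{2^t-1}^2=0$), and $c_{2^t-4}=sp^*(\wa_{2^t-4})\bmod \text{decomposables}$. Feeding this back through~(\ref{eq:izo-nerastavljivi}) kills every indecomposable of $H^*(\gr_{2^t,4})$ outside degrees $2,3,4,2^t-4$, completing the induction.
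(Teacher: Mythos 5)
Your outline reproduces the paper's architecture (the two sphere bundles, Proposition \ref{cohW}, the identification of indecomposables via Lemma \ref{izo-nerastavljivi}, and $\ker sp^*=\w_4H^*(\gr_{2^t,4})$), and it works in the range $5\leqslant r<2^t-1$, where (\ref{F_r^r}) gives $F_r^r=H^r(W_{3,1}^{2^t})$ and degree reasons exclude $\mathrm a_{2^t-1}$. But it has a genuine gap exactly where the real difficulty sits, namely in degrees $r\geqslant 2^t-1$. There a nonzero class $x\in F_r^r=\im sp^*$ may, when expressed through Proposition \ref{cohW}, have the form $x=\mathrm a_{2^t-1}y+(\text{polynomial in }\ww_2,\ww_3,c_{2^t-4})$, and your dismissal of this possibility --- that $\mathrm a_{2^t-1}$ ``contributes nothing new as it is a product'' --- is simply false: $\mathrm a_{2^t-1}=p_1^*(\wa_{2^t-1})$ is one of the four algebra generators of $H^*(W_{3,1}^{2^t})$ and is not a polynomial in $\ww_2,\ww_3,c_{2^t-4}$ (the Leray--Hirsch basis used later in the proof of Proposition \ref{a_na_kvadrat} shows that half of an additive basis of $H^*(W_{3,1}^{2^t})$ involves it). Saying that $\mathrm a_{2^t-1}$, if it lies in $\im sp^*$, ``is likewise hit by $H^*(\gr_{2^t,4})$'' does not help --- being hit is precisely the danger: if such an $x$ were nonzero in $F_r^r/D(F_r^r)$, Lemma \ref{izo-nerastavljivi} would produce a new indecomposable of $H^r(\gr_{2^t,4})$ in degree $r\geqslant2^t-1$, which is exactly what you must exclude. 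The paper supplies the missing argument by Poincar\'e duality on the closed $(2^{t+2}-13)$-dimensional manifold $W_{3,1}^{2^t}$: for $x=\mathrm a_{2^t-1}y$ a dual class $z$ with $xz\neq0$ can be taken to be a polynomial in $\ww_2,\ww_3,c_{2^t-4}$ only (since $\mathrm a_{2^t-1}^2=0$), hence $z\in F^{2^{t+2}-13-r}_{2^{t+2}-13-r}$ and $xz\in F^{2^{t+2}-13}_{2^{t+2}-13}\cong E_\infty^{2^{t+2}-13,0}=0$ because $\dim\gr_{2^t,4}=2^{t+2}-16$; this contradiction is the heart of the proof and has no counterpart in your proposal.

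A secondary but real problem is the case $t=3$: you assert $2^t-4>4$ for $t\geqslant3$, which fails at $t=3$, where $2^t-4=4$. Then $H^4(\gr_{8,4})$ already contains the class $\wa_4\notin\im p^*$, so degree $4$ is not ``nothing to prove,'' and Lemma \ref{izo-nerastavljivi} (which requires $r>4$) is unavailable in exactly the degree where the new generator lives; the paper handles $t=3$ by invoking Rusin's result, and you would need that or a separate low-degree argument. On the positive side, your naturality argument that $sp^*\wg_{2^t,4}\cong p_1^*\wg_{2^t,3}\oplus\varepsilon^1$, giving $sp^*(\w_2)=\ww_2$ and $sp^*(\w_3)=\ww_3$ directly, is a clean way to obtain (\ref{w_i}) independently of the theorem, which the paper only records afterwards.
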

\begin{proof}
The statement of the theorem is true for $t=3$ by \cite[Theorem 3.3]{Rusin}.

Suppose now that $t>3$. We need to prove that for every $r\geqslant0$ all classes in $H^r(\gr_{2^t,4})$ are polynomials in $\w_2$, $\w_3$, $\w_4$ and $\wa_{2^t-4}$. 

If $r<2^t-4$, then $H^r(\gr_{2^t,4})\subseteq\im p^*$, and since $\w_2$, $\w_3$ and $\w_4$ generate $\im p^*$, we are done.

\medskip

Let $r=2^t-4$. We know that the coset of $\wa_{2^t-4}$ in $H^{2^t-4}(\gr_{2^t,4})/D(H^{2^t-4}(\gr_{2^t,4}))$ is nonzero, and since $D(H^{2^t-4}(\gr_{2^t,4}))\subseteq\im p^*$, it suffices to prove that it is the only nonzero coset, i.e., $H^{2^t-4}(\gr_{2^t,4})/D(H^{2^t-4}(\gr_{2^t,4}))\cong\mathbb Z_2$. The assumption $t>3$ implies $2^t-4>4$, and so due to Lemma \ref{izo-nerastavljivi} it is enough to verify that
\begin{equation}\label{r=2^t-4}
F_{2^t-4}^{2^t-4}/D(F_{2^t-4}^{2^t-4})\cong\mathbb Z_2.
\end{equation}
By Proposition \ref{cohW} and equation (\ref{F_r^r}) we have 
\begin{equation}\label{D}
\ww_2\in F_2^2,\, \ww_3\in F_3^3 \,\mbox{ and }\, c_{2^t-4}\in F_{2^t-4}^{2^t-4},
\end{equation}
and the elements of $D(F_{2^t-4}^{2^t-4})$ are exactly the polynomials in $\ww_2$ and $\ww_3$ (of cohomological degree $2^t-4$). Since we know that the quotient $F_{2^t-4}^{2^t-4}/D(F_{2^t-4}^{2^t-4})$ is nontrivial, Proposition \ref{cohW} implies that the coset of $c_{2^t-4}$ must be nonzero, and that it is the only nonzero coset. This proves (\ref{r=2^t-4}) and concludes the case $r=2^t-4$.

\medskip

For $r>2^t-4$ we need to show that every class in $H^r(\gr_{2^t,4})$ is decomposable, and by Lemma \ref{izo-nerastavljivi} this is equivalent to $F_r^r=D(F_r^r)$. So let $x\in F_r^r$, $x\neq0$. If we prove that $x$ is a polynomial in $\ww_2$, $\ww_3$ and $c_{2^t-4}$, then (\ref{D}) (along with the fact that the filtration is stable) ensures that $x\in D(F_r^r)$.

\begin{figure}[h]
    \centering
    \includegraphics[width=\linewidth]{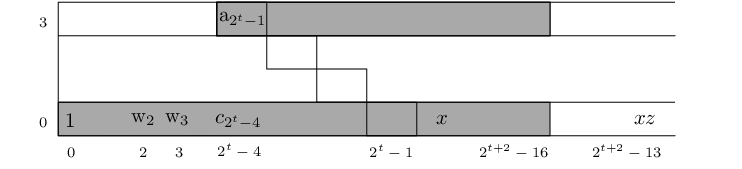}
    \caption{$E_\infty$-page of the spectral sequence for $S^3\rightarrow W_{3,1}^{2^t} \xrightarrow{sp} \gr_{2^t,4}$}
    \label{fig:sn}
\end{figure}

Assume to the contrary that $x$ is not expressible as a polynomial in $\ww_2$, $\ww_3$ and $c_{2^t-4}$. By Proposition \ref{cohW} and equality (\ref{a^2=0}) this means that $x=\mathrm{a}_{2^t-1}y$, where $y$ is a polynomial in $\ww_2$, $\ww_3$ and $c_{2^t-4}$ (in expressing $x$ we can ignore the monomials that do not contain $\mathrm{a}_{2^t-1}$, because they belong to $D(F_r^r)$). By the Poincar\'e duality there exists a class $z\in H^{2^{t+2}-13-r}(W_{3,1}^{2^t})$ with the property $xz\neq0$ in $H^{2^{t+2}-13}(W_{3,1}^{2^t})$ (recall that $W_{3,1}^{2^t}$ is a closed connected manifold of dimension $2^{t+2}-13$). Since $x=\mathrm{a}_{2^t-1}y$ and $\mathrm{a}_{2^t-1}^2=0$, the class $z$ can be chosen to be a polynomial in $\ww_2$, $\ww_3$ and $c_{2^t-4}$ only. The stability of the filtration and (\ref{D}) now imply $z\in F_{2^{t+2}-13-r}^{2^{t+2}-13-r}$, and since $x\in F_r^r$ we have
\[xz\in F_{2^{t+2}-13}^{2^{t+2}-13}\cong E_\infty^{2^{t+2}-13,0}=0,\]
since the base space $\gr_{2^t,4}$ is a manifold of dimension $2^{t+2}-16$ (see Figure \ref{fig:sn}). However, this contradicts the fact $xz\neq0$ and finishes the proof. 
\end{proof}

As we have already established in (\ref{kersp^*}), $\ker sp^*=\w_4 H^*(\gr_{2^t,4})$, so it is obvious from Theorem \ref{generators} and Proposition \ref{cohW} that 
\begin{equation}\label{w_i}
    sp^*(\w_2)=\ww_2 \,\mbox{ and }\, sp^*(\w_3)=\ww_3.
\end{equation}
Also, $sp^*(\wa_{2^t-4})=c_{2^t-4}+f(\ww_2,\ww_3)$ for some polynomial $f$ in two variables. Namely, otherwise, by Proposition \ref{cohW} and (\ref{w_i}) we would have $sp^*(\wa_{2^t-4})=g(\ww_2,\ww_3)=sp^*(g(\w_2,\w_3))$ for some polynomial $g$, which would imply $\wa_{2^t-4}-g(\w_2,\w_3)\in\ker sp^*=\w_4 H^{2^t-8}(\gr_{2^t,4})$, and this contradicts the fact $\wa_{2^t-4}\notin\im p^*$. 

If we alter $\wa_{2^t-4}$ by adding $f(\w_2,\w_3)$, then we achieve that
\begin{equation}\label{a}
    sp^*(\wa_{2^t-4})=c_{2^t-4},
\end{equation}
while both (\ref{indec}) and Theorem \ref{generators} still hold.

\begin{remark}
    As we have already mentioned, the class $\wa_{2^t-4}$ is indecomposable (it cannot be written as a polynomial in classes of strictly smaller degree), and it is easily seen from (\ref{imp*}) that $\w_2$, $\w_3$ and $\w_4$ are indecomposable as well. Theorem \ref{generators} asserts that these four are the only indecomposable classes in $H^*(\gr_{2^t,4})$ (up to addition of decomposable ones).
\end{remark}

\section{Cohomology algebra $H^*(\widetilde G_{2^t,4})$}
\label{sec:main_proof}

In this section we prove Theorem \ref{Cohomology_G_2^t,4}. As before, $t\geqslant3$ is a fixed integer.

\begin{proposition}\label{a_na_kvadrat}
    If $\wa_{2^t-4}\in H^{2^t-4}(\gr_{2^t,4})$ is the indecomposable class (chosen in the previous section), then in $H^*(\gr_{2^t,4})$ one has
    \[\widetilde a_{2^t-4}^2 = \widetilde P\wa_{2^t-4} + \widetilde Q,\]
    for some polynomials $\widetilde P$ and $\widetilde Q$ in $\widetilde w_2$, $\widetilde w_3$ and $\widetilde w_4$.
\end{proposition}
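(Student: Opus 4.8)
The plan is to analyze the square $\wa_{2^t-4}^2$ via the Leray--Hirsch theorem applied to the first sphere bundle $S^{2^t-4}\xrightarrow{i}W_{3,1}^{2^t}\xrightarrow{p_1}\gr_{2^t,3}$. In Section~\ref{sec:Indecomposables} we saw that the Leray--Serre spectral sequence of this bundle collapses and that $i^*(c_{2^t-4})$ generates $H^*(S^{2^t-4})$; hence the Leray--Hirsch theorem yields that $H^*(W_{3,1}^{2^t})$ is a free $H^*(\gr_{2^t,3})$-module on $\{1,c_{2^t-4}\}$ (so $p_1^*$ is injective) and that $\dim H^*(W_{3,1}^{2^t})=2\dim H^*(\gr_{2^t,3})$. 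Expanding the middle-degree class $c_{2^t-4}^2\in H^{2^{t+1}-8}(W_{3,1}^{2^t})$ in this module basis gives
\[
c_{2^t-4}^2=p_1^*(\xi)+p_1^*(\eta)\,c_{2^t-4}
\]
for uniquely determined $\xi\in H^{2^{t+1}-8}(\gr_{2^t,3})$ and $\eta\in H^{2^t-4}(\gr_{2^t,3})$.

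The decisive step is to show that $\xi$ and $\eta$ both lie in $\im p^*\subseteq H^*(\gr_{2^t,3})$; equivalently, in the splitting $H^*(\gr_{2^t,3})\cong\im p^*\otimes\Lambda(\wa_{2^t-1})$ of \cite[Theorem~1.1]{CP} they have no $\wa_{2^t-1}$-component (for $\eta$ this is automatic, since $\deg\eta=2^t-4<2^t-1$). I would deduce this by a dimension count. Put
\[
M:=p_1^*(\im p^*)\oplus p_1^*(\im p^*)\cdot c_{2^t-4}\ \subseteq\ H^*(W_{3,1}^{2^t});
\]
this sum is direct by the freeness of the Leray--Hirsch module, so $\dim M=2\dim\im p^*=\dim H^*(\gr_{2^t,3})$, the last equality being the half-dimension fact. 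Now $M\subseteq\im sp^*$, because $\ww_2=sp^*(\w_2)$, $\ww_3=sp^*(\w_3)$ and $c_{2^t-4}=sp^*(\wa_{2^t-4})$ (see (\ref{defw}), (\ref{w_i}), (\ref{a})) while $p_1^*(\im p^*)$ is precisely the subalgebra of $H^*(W_{3,1}^{2^t})$ generated by $\ww_2$ and $\ww_3$. At the same time Lemma~\ref{1/2dim}, applied to the sphere bundle $S^3\to W_{3,1}^{2^t}\xrightarrow{sp}\gr_{2^t,4}$, gives $\dim\im sp^*=\tfrac12\dim H^*(W_{3,1}^{2^t})=\dim H^*(\gr_{2^t,3})=\dim M$. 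Therefore $\im sp^*=M$, and since $c_{2^t-4}^2\in\im sp^*$, uniqueness of the Leray--Hirsch expansion forces $\xi,\eta\in\im p^*$.

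It remains to descend to $\gr_{2^t,4}$. As $\xi,\eta\in\im p^*$, there are classes $\xi',\eta'\in H^*(\gr_{2^t,4})$ — each a polynomial in $\w_2$ and $\w_3$ — with $sp^*(\xi')=p_1^*(\xi)$ and $sp^*(\eta')=p_1^*(\eta)$. Then $sp^*\big(\wa_{2^t-4}^2+\eta'\,\wa_{2^t-4}+\xi'\big)=c_{2^t-4}^2+c_{2^t-4}^2=0$, so by (\ref{kersp^*})
\[
\wa_{2^t-4}^2=\eta'\,\wa_{2^t-4}+\xi'+\w_4 z
\]
for some $z\in H^{2^{t+1}-12}(\gr_{2^t,4})$. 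Since $2^{t+1}-12<2(2^t-4)$, no monomial of degree $2^{t+1}-12$ can contain $\wa_{2^t-4}^k$ with $k\geqslant2$, so Theorem~\ref{generators} gives $z=z_0+z_1\wa_{2^t-4}$ with $z_0,z_1\in\mathbb Z_2[\w_2,\w_3,\w_4]$. Substituting back, $\wa_{2^t-4}^2=\widetilde P\wa_{2^t-4}+\widetilde Q$ with $\widetilde P=\eta'+\w_4 z_1$ and $\widetilde Q=\xi'+\w_4 z_0$, which are polynomials in $\w_2,\w_3,\w_4$; this is exactly the asserted relation.

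I expect the main obstacle to be the identity $\im sp^*=M$ in the second paragraph: it rests on reconciling the two evaluations of $\dim H^*(W_{3,1}^{2^t})$ — the value $2\dim H^*(\gr_{2^t,3})$ coming from the collapse of the first spectral sequence and the value $2\dim\im sp^*$ coming from Lemma~\ref{1/2dim} — together with the observation that the manifestly $sp^*$-contained subspace $M$ already realizes the full dimension $\dim H^*(\gr_{2^t,3})$. Everything after that is routine bookkeeping with cohomological degrees and the generating set of Theorem~\ref{generators}.
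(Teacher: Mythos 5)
Your proof is correct and follows essentially the same route as the paper: Leray--Hirsch for $S^{2^t-4}\to W_{3,1}^{2^t}\to\gr_{2^t,3}$, the half-dimension Lemma \ref{1/2dim} applied to $sp$ to identify $\im sp^*$ with the span of (polynomials in $\ww_2,\ww_3$) and (such polynomials times $c_{2^t-4}$), then $\ker sp^*=\w_4H^*(\gr_{2^t,4})$ and Theorem \ref{generators} to descend. The only cosmetic difference is that you run the dimension count through the splitting $H^*(\gr_{2^t,3})\cong\im p^*\otimes\Lambda(\wa_{2^t-1})$ rather than the explicit monomial basis of \cite[Corollary 3.6.1]{CP}, and you make explicit the degree argument showing the correction term $\w_4z$ involves $\wa_{2^t-4}$ at most linearly.
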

\begin{proof}
    Consider once again the fiber bundle $S^{2^t-4}\xrightarrow{i} W_{3,1}^{2^t} \xrightarrow{p_1} \widetilde G_{2^t,3}$. Using the notation from the previous section, we have that $1=i^*(1)$ and $s=i^*(c_{2^t-4})$ constitute a basis for $H^*(S^{2^t-4})$. The Leray--Hirsch theorem (see e.g.\ \cite[Theorem 4D.1.]{Hatcher}) provides us an additive isomorphism $\Phi: H^*(\gr_{2^t,3})\otimes H^*(S^{2^t-4}) \rightarrow H^*(W_{3,1}^{2^t})$ given by $\Phi(x\otimes1) = p_1^*(x)$ and $\Phi(x\otimes s) = p_1^*(x)\cdot c_{2^t-4}$. 
    
    By \cite[Corollary 3.6.1]{CP}, $H^*(\gr_{2^t,3})$ has an additive basis consisting of elements $\w_2^b\w_3^c$ and $\wa_{2^t-1} \w_2^b \w_3^c$, where $(b,c)\in T$, and $T$ is the set of all pairs of nonnegative integers $(b,c)$ such that for all $i\in\{0,1,\dots,t-1\}$ one has either $b<2^{t-1} - 2^i$ or  $c<2^i-1$. Let $N$ be the cardinality of $T$, so that this basis of $H^*(\gr_{2^t,3})$ has $2N$ elements. The isomorphism $\Phi$ then gives us an additive basis of $H^*(W_{3,1}^{2^t})$ consisting of elements (see (\ref{defw})):
    \begin{itemize}
        \item $\Phi(\w_2^b\w_3^c\otimes1)=p_1^*(\w_2^b\w_3^c)=\ww_2^b\ww_3^c$;
        \item $\Phi(\wa_{2^t-1}\w_2^b\w_3^c\otimes1)=p_1^*(\wa_{2^t-1}\w_2^b\w_3^c)=\ww_2^b\ww_3^c\mathrm a_{2^t-1}$;
        \item $\Phi(\w_2^b\w_3^c\otimes s)=p_1^*(\w_2^b\w_3^c)\cdot c_{2^t-4}=\ww_2^b\ww_3^cc_{2^t-4}$;
        \item $\Phi(\wa_{2^t-1}\w_2^b\w_3^c\otimes s)=p_1^*(\wa_{2^t-1}\w_2^b\w_3^c)\cdot c_{2^t-4}=\ww_2^b\ww_3^cc_{2^t-4}\mathrm a_{2^t-1}$;
    \end{itemize}
    for $(b,c)\in T$. Note that $\dim H^*(W_{3,1}^{2^t})=4N$.

    Consider now the sphere bundle $S^3\rightarrow W_{3,1}^{2^t} \xrightarrow{sp} \gr_{2^t,4}$. From Lemma \ref{1/2dim} we have
    \[\dim(\im sp^*) = \frac{1}{2} \dim H^*(W_{3,1}^{2^t}) = 2N.\]
    On the other hand, by (\ref{w_i}) and (\ref{a}) we know that $2N$ (linearly independent) elements of the first and the third kind above are in $\im sp^*$, and so they make up the whole basis of $\im sp^*$. Thus, since $c_{2^t-4}^2\in \im sp^*$, we can represent it via the basis elements as
    \[c_{2^t-4}^2 = c_{2^t-4} \overline P + \overline Q,\]
    where $\overline P$ and $\overline Q$ are polynomials in $\mathrm w_2$ and $\mathrm w_3$ only. Now, again by (\ref{w_i}) and (\ref{a})
    \[sp^*\bigg(\wa_{2^t-4}^2 - \big(\wa_{2^t-4} \overline P(\w_2,\w_3) + \overline Q(\w_2,\w_3)\big)\bigg) = c_{2^t-4}^2 - (c_{2^t-4} \overline P + \overline Q) = 0,\]
    and from (\ref{kersp^*}) we conclude
    \[\wa_{2^t-4}^2 = \wa_{2^t-4}\overline P(\w_2,\w_3) + \overline Q(\w_2,\w_3) + \widetilde w_4 \overline R,\]
    for some $\overline R\in H^{2^{t+1}-12}(\gr_{2^t,4})$, which, by Theorem \ref{generators}, must be a polynomial in $\w_2$, $\w_3$, $\w_4$ and $\wa_{2^t-4}$. When we regroup summands, we get 
    \[\wa_{2^t-4}^2 =\widetilde P\wa_{2^t-4} +\widetilde Q,\]
    for some polynomials $\widetilde P$ and $\widetilde Q$ in $\w_2$, $\w_3$ and $\w_4$.
    \end{proof}

If we choose polynomials $\widetilde P$ and $\widetilde Q$ provided by Proposition \ref{a_na_kvadrat}, then in the polynomial algebra $\mathbb Z_2[w_2,w_3,w_4,a_{2^t-4}]$ we can observe the corresponding polynomials $P$ and $Q$ in variables $w_2$, $w_3$ and $w_4$. According to Theorem \ref{generators}, the algebra morphism
\[\Psi:\mathbb Z_2[w_2,w_3,w_4,a_{2^t-4}]\rightarrow H^*(\gr_{2^t,4}),\]
given by $\Psi(w_2)=\w_2$, $\Psi(w_3)=\w_3$, $\Psi(w_4)=\w_4$ and $\Psi(a_{2^t-4})=\wa_{2^t-4}$, is an epimorphism. Therefore, the induced map
\begin{equation}\label{Psiisom}
\overline\Psi:\mathbb Z_2[w_2,w_3,w_4,a_{2^t-4}]/\ker\Psi\rightarrow H^*(\gr_{2^t,4})
\end{equation}
is an isomorphism, and in order to prove Theorem \ref{Cohomology_G_2^t,4} it now suffices to show 
\[\ker\Psi=(g_{2^t-2},g_{2^t-1},g_{2^t},a_{2^t-4}^2+Pa_{2^t-4}+Q).\]
Due to Proposition \ref{a_na_kvadrat} we know that $a_{2^t-4}^2+Pa_{2^t-4}+Q\in\ker\Psi$.
Also, according to (\ref{imp*}), for the map $p^*$ induced by $p:\gr_{2^t,4}\rightarrow G_{2^t,4}$ we have 
\[\im p^*\cong\mathbb Z_2[w_2,w_3,w_4]/(g_{2^t-3},g_{2^t-2},g_{2^t-1},g_{2^t})=\mathbb Z_2[w_2,w_3,w_4]/(g_{2^t-2},g_{2^t-1},g_{2^t}),\] 
since $g_{2^t-3}=0$ (see \cite[Lemma 2.3(ii)]{Korbas:Osaka}). So $g_{2^t-2},g_{2^t-1},g_{2^t}\in\ker\Psi$, and we conclude that, if $I:=(g_{2^t-2},g_{2^t-1},g_{2^t},a_{2^t-4}^2+Pa_{2^t-4}+Q)$, then
\begin{equation}\label{I_podskup_ker}
   I\subseteq\ker\Psi.
\end{equation}
We are left to prove the reverse containment. In order to do so, in the following subsection we detect a Gr\"obner basis for the ideal $I$.

\subsection{Gr\"obner basis for $I$}

For $k=4$ the equation (\ref{eq_rec_g}) simplifies to
\begin{equation}\label{eq_rec_g_k=4}
 g_r =   w_2 g_{r-2} + w_3 g_{r-3} +   w_4 g_{r-4}, \quad r\geqslant 1
\end{equation}
(recall that we have defined $g_{-1}=g_{-2}=g_{-3}=0$),
and its generalization (\ref{eq_rec_g_gen}) reduces to
\begin{equation}\label{eq_rec_g_k=4_gen}
    g_r =   w_2^{2^i} g_{r-2^{i+1}} + w_3^{2^i} g_{r-3\cdot 2^i} +   w_4^{2^i} g_{r-2^{i+2}}, \quad r\geqslant2^{i+2}-3,\, i\geqslant0.
\end{equation}
Since $I=(g_{2^t-2},g_{2^t-1},g_{2^t},a_{2^t-4}^2+Pa_{2^t-4}+Q)$ and $g_{2^t-3}=0$, (\ref{eq_rec_g_k=4}) implies $g_r\in I$ for all $r\geqslant2^t-2$. In particular, $I=(G)$, where
\begin{equation}\label{Grbaza}
G=\{g_{2^t-3+2^i}\mid 0\leqslant i\leqslant t-1\}\cup\{g_{2^t},a_{2^t-4}^2+Pa_{2^t-4}+Q\}.     
\end{equation}
We are going to prove that this basis $G$ of the ideal $I\unlhd\mathbb Z_2[w_2,w_3,w_4,a_{2^t-4}]$ is its Gr\"obner basis. The following two lemmas will be helpful.

\begin{lemma}\label{posl}
    For all integers $r\geqslant-3$ and $i\geqslant0$,
    \[w_3^{2^i-1}g_r^{2^i}=g_{2^i(r+3)-3}.\]
\end{lemma}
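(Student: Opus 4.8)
The plan is to prove the identity $w_3^{2^i-1}g_r^{2^i}=g_{2^i(r+3)-3}$ by induction on $i$, since the statement for a fixed $r$ is trivially true for $i=0$ (both sides equal $g_r$). The natural inductive step is from $i$ to $i+1$, using the doubling inherent in squaring over $\mathbb{Z}_2$. Specifically, I would square the inductive hypothesis $w_3^{2^i-1}g_r^{2^i}=g_{2^i(r+3)-3}$ to obtain $w_3^{2^{i+1}-2}g_r^{2^{i+1}}=g_{2^i(r+3)-3}^2$, so everything reduces to understanding the square of a single $g$-polynomial, namely showing $w_3 \cdot g_{s}^2 = g_{2s+3}$ for the relevant index $s=2^i(r+3)-3$ (equivalently $2s+3 = 2^{i+1}(r+3)-3$). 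Multiplying the squared hypothesis by one more factor of $w_3$ and invoking this would finish the induction.

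So the crux is the base-type identity $w_3 g_s^2 = g_{2s+3}$. This should come directly from the generalized recurrence (\ref{eq_rec_g_k=4_gen}) with $i=1$: $g_{2s+3} = w_2^2 g_{2s+3-4} + w_3^2 g_{2s+3-6} + w_4^2 g_{2s+3-8} = w_2^2 g_{2s-1} + w_3^2 g_{2s-3} + w_4^2 g_{2s-5}$, valid once $2s+3 \geqslant 2^{3}-3 = 5$, i.e. $s\geqslant 1$. On the other hand, over $\mathbb{Z}_2$ the square of the ordinary recurrence $g_s = w_2 g_{s-2} + w_3 g_{s-3} + w_4 g_{s-4}$ (which holds for all $s\geqslant 1$) gives $g_s^2 = w_2^2 g_{s-2}^2 + w_3^2 g_{s-3}^2 + w_4^2 g_{s-4}^2$. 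Comparing, I want $w_3 g_s^2 = g_{2s+3}$, which would follow if $w_3 g_{m}^2 = g_{2m+3}$ already held for the smaller indices $m = s-2, s-3, s-4$ — so the identity $w_3 g_m^2 = g_{2m+3}$ is itself naturally proved by a separate induction on $m$, with the recurrences matching term-by-term, after checking the small base cases $m \in \{-3,-2,-1,0\}$ (where $g_{-3}=g_{-2}=g_{-1}=0$, $g_0 = 1$, and one checks $w_3 g_0^2 = w_3 = g_3$ directly from (\ref{eq_explicitly_g}) or (\ref{eq_rec_g_k=4})).

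I expect the main obstacle to be purely bookkeeping: making sure the index ranges line up so that the recurrences I invoke are actually valid (the generalized recurrence needs $r \geqslant 2^{i+2}-3$, the base recurrence needs $r\geqslant 1$, and after doubling one must confirm $2m+3$ stays in the valid range), and handling the handful of small/negative indices $r\in\{-3,-2,-1,0\}$ separately where $g_r$ is either $0$ or $1$. Once the small cases are verified and one organizes the argument as a clean double induction — inner induction on the index giving $w_3 g_m^2 = g_{2m+3}$ for all $m\geqslant -3$, then outer induction on $i$ lifting this to the $2^i$-power statement — the algebra is routine arithmetic modulo 2. An alternative, perhaps slicker, route would bypass the two-layer induction by directly iterating $w_3 g_m^2 = g_{2m+3}$: applying it $i$ times to $g_r$ and tracking how the index $r \mapsto 2r+3$ evolves (note $2r+3$ is exactly the map whose $i$-th iterate sends $r$ to $2^i(r+3)-3$) while the accumulated power of $w_3$ telescopes to $1 + 2 + \cdots + 2^{i-1} = 2^i-1$; I would likely present it this way if it keeps the index conditions transparent.
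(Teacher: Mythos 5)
Your proposal is correct and matches the paper's proof: the paper likewise isolates the key identity $w_3g_r^2=g_{2r+3}$ (proved by an inner induction on $r$ with the base cases $r\in\{-3,-2,-1,0\}$ and the recurrences (\ref{eq_rec_g_k=4}) and (\ref{eq_rec_g_k=4_gen}) for $i=1$), and then runs the outer induction on $i$ by squaring the hypothesis and multiplying by one more $w_3$. The index bookkeeping you flag works out exactly as you anticipate, so there is nothing substantively different here.
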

\begin{proof}
    The proof is by induction on $i$. For $i=0$ the equality is simply $g_r=g_r$. For $i=1$ we need to verify
    \begin{equation}\label{i=1}
      w_3g_r^2=g_{2r+3}  \quad\mbox{ for all } r\geqslant-3. 
    \end{equation}
    For $r\in\{-3,-2,-1\}$ both sides of this equation are zero, and for $r=0$ (\ref{i=1}) reduces to $w_3=g_3$, which is easily checked (e.g.\ from (\ref{eq_explicitly_g})). If $r\geqslant1$ and if we assume $w_3g_j^2=g_{2j+3}$ for $-3\leqslant j<r$, then by (\ref{eq_rec_g_k=4}) and (\ref{eq_rec_g_k=4_gen})
    \begin{align*}
    w_3g_r^2&=w_3(w_2 g_{r-2} + w_3 g_{r-3} +w_4 g_{r-4})^2=w_2^2w_3g_{r-2}^2 + w_3^2w_3g_{r-3}^2 +w_4^2w_3g_{r-4}^2\\
            &=w_2^2g_{2r-1} + w_3^2g_{2r-3} +w_4^2g_{2r-5}=g_{2r+3}.
    \end{align*}
    This induction on $r$ proves (\ref{i=1}) and concludes the case $i=1$.
    
    Suppose now that $i\geqslant2$ and that
    \[w_3^{2^{i-1}-1}g_r^{2^{i-1}}=g_{2^{i-1}(r+3)-3} \quad\mbox{ for all } r\geqslant-3.\]
    If we square this equation, and then multiply with $w_3$ we obtain
    \[w_3^{2^i-1}g_r^{2^i}=w_3g_{2^{i-1}(r+3)-3}^2=g_{2^i(r+3)-3}\]
    by (\ref{i=1}). This completes the induction on $i$ and the proof of the lemma.
\end{proof}

\begin{lemma}\label{g3=g4}
    For $0\leqslant i\leqslant t-1$ none of the monomials in $g_{2^t-3+2^i}$ contains the variable $w_4$.
\end{lemma}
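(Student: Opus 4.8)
The plan is to extract the relevant terms of $g_{2^t-3+2^i}$ from the explicit formula \eqref{eq_explicitly_g} and show that any monomial containing $w_4$ must have a vanishing coefficient. Write $n=2^t-3+2^i$ with $0\leqslant i\leqslant t-1$. A monomial $w_2^{a_2}w_3^{a_3}w_4^{a_4}$ with $a_4\geqslant 1$ appears in $g_n$ with coefficient $\binom{a_2+a_3+a_4}{a_2}\binom{a_3+a_4}{a_3}$, subject to $2a_2+3a_3+4a_4=n$. So the task reduces to a purely combinatorial statement: whenever $2a_2+3a_3+4a_4=2^t-3+2^i$ and $a_4\geqslant 1$, at least one of the two binomial coefficients is even. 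I would prove this by analyzing $2$-adic digits via Lucas' theorem.

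First I would reduce modulo small powers of $2$ to pin down the parities of the exponents. Since $n=2^t-3+2^i$ is odd (as $2^t-3$ is odd and, for $i\geqslant1$, adding an even number keeps it odd; for $i=0$ we get $2^t-2$, which is even, so the case $i=0$ must be handled slightly separately, or one notes $3a_3\equiv n\pmod 2$ forces $a_3$ odd when $n$ is odd). From $2a_2+3a_3+4a_4=n$ one reads off $a_3\equiv n\pmod 2$, and then, writing $a_3=n-2a_2-4a_4-2a_3+\dots$, one gets further congruences modulo $4$ relating $a_2$ and $a_3$. The key structural point I expect to use is that $n=2^t+2^i-3$ has a very sparse binary expansion: for $1\leqslant i\leqslant t-1$ it is $2^t+2^i-3 = 2^t + (2^i - 1) + (2^{i} - 2) $, hmm — more carefully, $2^i-3$ for $i\geqslant 2$ equals $2^i-3$ whose binary form is $1\underbrace{0\cdots0}_{}1\cdots$; the cleanest approach is to write $n = 2^t - 3 + 2^i$ and note $2^t-3 = 2^{t-1}+2^{t-2}+\dots+2^2+1$ in binary (i.e.\ $1\,\underbrace{1\cdots1}_{t-2}\,01$). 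Adding $2^i$ then either flips a single bit (if $i\in\{0\}$ or $i\geqslant t$, impossible here) or causes a carry. I would split into the cases $i=1$, $2\leqslant i\leqslant t-2$, and $i=t-1$, computing the binary expansion of $n$ explicitly in each, and then argue via Lucas that the partition $2a_2+3a_3+4a_4=n$ with $a_4\geqslant 1$ always creates a binary carry forcing $\binom{a_2+a_3+a_4}{a_2}$ or $\binom{a_3+a_4}{a_3}$ to be even.

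An alternative, possibly slicker route uses Lemma~\ref{posl}. That lemma gives $w_3^{2^i-1}g_{2^t-3}^{2^i} = g_{2^i(2^t)-3} = g_{2^{t+i}-3}$, which is not quite our index; but a variant of the same telescoping — applied to $g_{2^t-3+2^i}$ via the generalized recurrence \eqref{eq_rec_g_k=4_gen} — may let us write $g_{2^t-3+2^i}$ in terms of $g$'s of smaller, $w_4$-free index together with controlled powers of $w_2,w_3$. Concretely, \eqref{eq_rec_g_k=4_gen} with a suitable $j$ expresses $g_{2^t-3+2^i}$ as $w_2^{2^j}g_{\star} + w_3^{2^j}g_{\star} + w_4^{2^j}g_{\star}$ where the last index $2^t-3+2^i - 2^{j+2}$ becomes negative (hence $g_\star=0$) for $j$ chosen so that $2^{j+2} > 2^t-3+2^i$, i.e.\ the $w_4$-term drops out entirely, reducing the claim to the $w_4$-freeness of two $g$'s of strictly smaller index, which then follows by induction on $t$ (the base case $t=3$ being a short direct check). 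This is the approach I would actually pursue, since it sidesteps the digit bookkeeping.

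The main obstacle is making the induction close cleanly: after applying \eqref{eq_rec_g_k=4_gen} one needs the two resulting indices $2^t-3+2^i-2^{j+1}$ and $2^t-3+2^i-3\cdot 2^j$ to again be of the form $2^{t'}-3+2^{i'}$ with $0\leqslant i'\leqslant t'-1$ (or to be negative, or to be $g_{2^t-3}$-type), so that the inductive hypothesis or Lemma~\ref{posl} applies. Checking that the arithmetic of the exponents stays inside the allowed family for every $i$ in range — and picking the right splitting exponent $j$ as a function of $i$ — is the delicate part; if it fails to close, I would fall back on the direct Lucas-theorem computation of the binary carries described above, which is longer but unconditional.
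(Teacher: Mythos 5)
There is a genuine gap in the route you say you would actually pursue. Your plan is to apply \eqref{eq_rec_g_k=4_gen} directly to $g_{2^t-3+2^i}$ with a splitting exponent $2^j$ chosen so that the index of the $w_4$-term, $2^t-3+2^i-2^{j+2}$, is negative. But \eqref{eq_rec_g_k=4_gen} is only valid for $r\geqslant 2^{j+2}-3$, so killing the $w_4$-term this way forces $2^{j+2}-3\leqslant 2^t+2^i-3\leqslant 2^{j+2}-1$, i.e.\ $2^{j+2}\in\{2^t+2^i-2,\,2^t+2^i-1,\,2^t+2^i\}$. That interval contains a power of two only when $i\in\{0,1\}$; for every $2\leqslant i\leqslant t-1$ (e.g.\ $t=4$, $i=2$, $r=17$) no admissible $j$ exists, so the induction you describe cannot even get started for most of the indices in the lemma, let alone close. (One can sometimes salvage a single step by making the $w_4$-index land on a vanishing $g$, such as $g_{2^i-3}=0$, but that needs extra input and a wider inductive family than the one you allow, and you have not set any of this up.) Your fallback -- the Lucas-theorem analysis of the coefficients in \eqref{eq_explicitly_g} -- is only announced, with the case split and the carry argument left entirely unexecuted, so it cannot carry the proof either.

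The missing idea is the correct application of Lemma~\ref{posl}: take $r=2^{t-i}-2$ there, which gives $g_{2^t-3+2^i}=g_{2^i(2^{t-i}+1)-3}=w_3^{2^i-1}g_{2^{t-i}-2}^{2^i}$ (you tried only $r=2^t-3$ and discarded the lemma because the index did not match). This reduces the whole statement to showing that $g_{2^m-2}$ is $w_4$-free, and for indices of that special form your own trick does work uniformly: \eqref{eq_rec_g_k=4_gen} with exponent $2^{m-2}$ gives $g_{2^m-2}=w_2^{2^{m-2}}g_{2^{m-1}-2}+w_3^{2^{m-2}}g_{2^{m-2}-2}+w_4^{2^{m-2}}g_{-2}$, the last term vanishes since $g_{-2}=0$, and induction on $m$ closes immediately. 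This is exactly the paper's proof; without that reduction (or a completed Lucas computation) your proposal remains a plan with a step that fails for all $i\geqslant 2$.
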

\begin{proof}
From Lemma \ref{posl} we have
\[g_{2^t-3+2^i}=g_{2^i(2^{t-i}-2+3)-3}=w_3^{2^i-1}g_{2^{t-i}-2}^{2^i},\]
so it is enough to prove that polynomials $g_{2^m-2}$ ($m\geqslant0$) do not contain monomials divisible by $w_4$. This obviously holds for $m\in\{0,1\}$, which verifies the base for our induction on $m$. Proceeding to the induction step, suppose that $m\geqslant2$ and that $w_4$ does not appear in $g_{2^j-2}$ for $0\leqslant j<m$. Since by (\ref{eq_rec_g_k=4_gen})
\begin{align*}
  g_{2^m-2}&=w_2^{2^{m-2}}g_{2^{m-1}-2}+w_3^{2^{m-2}}g_{2^{m-2}-2}+w_4^{2^{m-2}}g_{-2}\\
           &=w_2^{2^{m-2}}g_{2^{m-1}-2}+w_3^{2^{m-2}}g_{2^{m-2}-2}, 
\end{align*}
the variable $w_4$ does not appear in $g_{2^m-2}$ either. 
\end{proof}

\begin{theorem}\label{thmGrebner}
  The set $G$ (see (\ref{Grbaza})) is a Gr\"obner basis for $I\unlhd\mathbb Z_2[w_2,w_3,w_4,a_{2^t-4}]$ w.r.t.\ the lexicographic monomial ordering $\preccurlyeq$ in which $w_3 \prec w_2\prec  w_4 \prec a_{2^{t}-4}$.  
\end{theorem}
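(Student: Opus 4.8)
The plan is to invoke the Buchberger criterion (Theorem \ref{buchberger}): it suffices to show that $S(f,g)$ reduces to zero modulo $G$ for every pair $f,g\in G$. Write $G=\{g_{2^t-3+2^i}\mid 0\leqslant i\leqslant t-1\}\cup\{g_{2^t},\,h\}$, where $h:=a_{2^t-4}^2+Pa_{2^t-4}+Q$. With respect to the chosen lexicographic order ($w_3\prec w_2\prec w_4\prec a_{2^t-4}$), the leading monomial of $h$ is $a_{2^t-4}^2$ (since $P,Q$ involve only $w_2,w_3,w_4$), while the $g_r$'s involve only $w_2,w_3,w_4$. By Lemma \ref{g3=g4} none of the polynomials $g_{2^t-3+2^i}$ ($0\leqslant i\leqslant t-1$) contains $w_4$, so I first need to pin down their leading monomials; using $g_{2^t-3+2^i}=w_3^{2^i-1}g_{2^{t-i}-2}^{2^i}$ from Lemma \ref{posl} and the explicit formula (\ref{eq_explicitly_g}) for $g_{2^{t-i}-2}$, the leading monomial should turn out to be a pure power of $w_2$ times a power of $w_3$ (I expect $\mathrm{LM}(g_{2^t-3+2^i})=w_2^{\,2^{t-1}-2^i}w_3^{\,2^i-1}$, since $w_2^{2^{m-1}-1}$ is the $\preccurlyeq$-largest monomial of $g_{2^m-2}$). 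The leading monomial of $g_{2^t}$ does involve $w_4$ (indeed $w_4^{2^{t-2}}$ divides a monomial of $g_{2^t}$ by (\ref{eq_rec_g_k=4_gen})), and I must identify it as well.

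Next I would split the $S$-polynomial computations into three groups. (1) Pairs $(g_r,g_{r'})$ among the $w_4$-free generators $g_{2^t-3+2^i}$: here the leading monomials are monomials in $w_2,w_3$ only, so their lcm is easy, and I would use the recurrence (\ref{eq_rec_g_k=4_gen}) together with Lemma \ref{posl} (which expresses each such $g$ as $w_3^{2^i-1}g_{2^{t-i}-2}^{2^i}$) to rewrite the $S$-polynomial as a combination of the $g$'s and reduce it to zero; the key algebraic input is that squaring relations among the $g_{2^m-2}$ propagate cleanly. (2) Pairs involving $g_{2^t}$: since $g_{2^t}=w_2g_{2^t-2}+w_3g_{2^t-3}+w_4g_{2^t-4}$ and $g_{2^t-3}=0$, and since $g_{2^t-2},g_{2^t-4}$ lie in $I=(G)$ by (\ref{eq_rec_g_k=4}), I would reduce these using the expression of $g_{2^t}$ in terms of the basis elements. (3) Pairs $(f,h)$ where $f$ is any $g$: because $\mathrm{LM}(h)=a_{2^t-4}^2$ is coprime to $\mathrm{LM}(f)$ (which is a monomial in $w_2,w_3,w_4$ only), the lcm is the product, and a standard lemma (Buchberger's first criterion / the coprimality criterion, \cite[Proposition 5.?]{Becker}) gives that $S(f,h)$ automatically reduces to zero modulo $\{f,h\}\subseteq G$. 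I would state this coprimality reduction explicitly for the reader even though it is routine.

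The main obstacle will be group (1): showing that the $S$-polynomials among the $w_4$-free generators $g_{2^t-3+2^i}$ reduce to zero. This is the genuinely combinatorial heart of the argument, where one must carefully track the $w_2,w_3$-leading terms of the $g_{2^m-2}$ and use the relations $w_3g_r^2=g_{2r+3}$ (equation (\ref{i=1})) and (\ref{eq_rec_g_k=4_gen}) to express the difference of two appropriately scaled generators as an explicit $\mathbb Z_2[w_2,w_3,w_4]$-combination of elements of $G$ with controlled leading monomials, so that the reduction terminates. In effect one shows the family $\{g_{2^t-3+2^i}\}_{i=0}^{t-1}\cup\{g_{2^t}\}$ is already a Gr\"obner basis for $(g_{2^t-2},g_{2^t-1},g_{2^t})\unlhd\mathbb Z_2[w_2,w_3,w_4]$, and then group (3) lets one adjoin $h$ freely. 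I expect the bookkeeping to mirror the proof of \cite[Lemma 3.5]{BasuChakraborty} and the Gr\"obner-basis computations for $H^*(\gr_{2^t,3})$ in \cite{CP}, so the structure is known; the work is in the exponents.
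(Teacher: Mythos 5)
Your framework (Buchberger's criterion, $\mathrm{LM}(h)=a_{2^t-4}^2$ for $h=a_{2^t-4}^2+Pa_{2^t-4}+Q$, and the coprimality criterion for pairs involving $h$) is the right one, and your guessed leading monomials $\mathrm{LM}(g_{2^t-3+2^i})=w_2^{2^{t-1}-2^i}w_3^{2^i-1}$ are correct (this is \cite[Proposition 3.4]{CP}). But your treatment of the pairs involving $g_{2^t}$ rests on a false statement: $g_{2^t-4}$ does \emph{not} lie in $I$. Every nonzero homogeneous element of $I=(g_{2^t-2},g_{2^t-1},g_{2^t},h)$ has cohomological degree at least $2^t-2$, whereas $g_{2^t-4}$ is a nonzero polynomial of degree $2^t-4$ (by (\ref{eq_explicitly_g}) it contains the monomial $w_4^{2^{t-2}-1}$); the recurrence (\ref{eq_rec_g_k=4}) only yields $g_r\in I$ for $r\geqslant 2^t-2$. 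So the identity $g_{2^t}=w_2g_{2^t-2}+w_4g_{2^t-4}$ does not express $g_{2^t}$ over $G$, and your proposed reduction for group (2) does not get started. The repair is the same observation you already use for group (3): by Lemma \ref{g3=g4} the generators $g_{2^t-3+2^i}$ are $w_4$-free, while (\ref{eq_explicitly_g}) gives $\mathrm{LM}(g_{2^t})=w_4^{2^{t-2}}$ (the constraint $2a_2+3a_3+4a_4=2^t$ forces $a_4\leqslant2^{t-2}$, with equality only for $w_4^{2^{t-2}}$, which occurs with coefficient $1$); hence $\mathrm{LM}(g_{2^t})$ is coprime to every other leading monomial appearing in $G$, and \cite[Theorem 5.66]{Becker} disposes of \emph{all} pairs containing $g_{2^t}$ or $h$ at once, which is exactly what the paper does.

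The second gap is that group (1), which you correctly identify as the heart of the matter, is never actually carried out: you only announce that the bookkeeping should mirror \cite{CP}. The paper's point is that no new computation is needed, because the reduction-to-zero statement can be imported verbatim. Since the mod $w_4$ reduction of the $k=4$ polynomial $g_r$ is the $k=3$ polynomial $g_r$, Lemma \ref{g3=g4} shows that each $g_{2^t-3+2^i}\in\mathbb Z_2[w_2,w_3,w_4]$ \emph{equals} the $k=3$ polynomial $g_{2^t-3+2^i}\in\mathbb Z_2[w_2,w_3]$, and \cite[Theorem 3.5]{CP} states that these polynomials form a Gr\"obner basis of $(g_{2^t-2},g_{2^t-1})\unlhd\mathbb Z_2[w_2,w_3]$ for the lexicographic order with $w_3<w_2$. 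As $\preccurlyeq$ restricts to that order on $\mathbb Z_2[w_2,w_3]$ and the corresponding reductions take place entirely inside $\mathbb Z_2[w_2,w_3]\subset\mathbb Z_2[w_2,w_3,w_4,a_{2^t-4}]$, the $S$-polynomials $S(g_{2^t-3+2^i},g_{2^t-3+2^j})$ reduce to zero modulo $G$ with no further exponent work. Until you either invoke this result or genuinely perform the reductions you sketch (via Lemma \ref{posl} and (\ref{eq_rec_g_k=4_gen})), the only family of $S$-polynomials not handled by coprimality remains unverified, so the proposal as it stands is an outline with its central step missing.
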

\begin{proof}
    We know that $G$ generates $I$, and according to Theorem \ref{buchberger} it suffices to show that $S$-polynomials of any two elements of $G$ reduce to zero modulo $G$. Since $S(f,f)=0$ we can consider only the $S$-polynomials of two distinct elements of $G$.

    It is obvious that $\mathrm{LM}(a_{2^t-4}^2+Pa_{2^t-4}+Q)=a_{2^t-4}^2$ (the polynomials $P$ and $Q$ do not contain $a_{2^t-4}$), and it is straightforward from (\ref{eq_explicitly_g}) that $\mathrm{LM}(g_{2^t})=w_4^{2^{t-2}}$. Also, for $0\leqslant i\leqslant t-1$, $\mathrm{LM}(g_{2^t-3+2^i})$ does not contain neither $a_{2^t-4}$ nor $w_4$ (by Lemma \ref{g3=g4}).
    
    So, if at least one polynomial in the $S$-polynomial is from $\{g_{2^t},a_{2^t-4}^2+Pa_{2^t-4}+Q\}$, then the leading monomials of the two polynomials are coprime, and \cite[Theorem 5.66]{Becker} ensures that the $S$-polynomial reduces to zero modulo $G$. We are therefore left to prove that for two distinct $i,j\in\{0,1,\ldots,t-1\}$, $S(g_{2^t-3+2^i},g_{2^t-3+2^j})$ reduces to zero.

    It is well-known (and easily seen from (\ref{eq_explicitly_g})) that the modulo $w_4$ reduction of $g_r$, $r\geqslant0$, is the corresponding polynomial $g_r$ for $k=3$. Lemma \ref{g3=g4} then implies that $g_{2^t-3+2^i}\in\mathbb Z_2[w_2,w_3,w_4]$ for $k=4$ equals the polynomial $g_{2^t-3+2^i}\in\mathbb Z_2[w_2,w_3]$ for $k=3$. On the other hand, it was shown in \cite[Theorem 3.5]{CP} that the polynomials $g_{2^t-3+2^i}$, $0\leqslant i\leqslant t-1$, constitute a Gr\"obner basis for the ideal $(g_{2^t-2},g_{2^t-1})\unlhd\mathbb Z_2[w_2,w_3]$ w.r.t.\ the lexicographic monomial ordering with $w_3<w_2$. Therefore, $S(g_{2^t-3+2^i},g_{2^t-3+2^j})$ does reduce to zero for all $i,j\in\{0,1,\ldots,t-1\}$.
\end{proof}

Moreover, it was proved in \cite[Proposition 3.4]{CP} that 
\[\mathrm{LM}(g_{2^t-3+2^i})=w_2^{2^{t-1}-2^i}w_3^{2^i-1}, \quad 0\leqslant i\leqslant t-1.\]
So, a monomial $w_2^bw_3^c$ is not divisible by any of these leading monomials if and only if for every $i\in\{0,1,\ldots,t-1\}$ one has either $b<2^{t-1}-2^i$ or $c<2^i-1$, i.e, if and only if $(b,c)\in T$ (see the proof of Proposition \ref{a_na_kvadrat}). Along with the facts $\mathrm{LM}(g_{2^t})=w_4^{2^{t-2}}$ and $\mathrm{LM}(a_{2^t-4}^2+Pa_{2^t-4}+Q)=a_{2^t-4}^2$, by our definition of Gr\"obner bases, this establishes the following corollary.

\begin{corollary}\label{cor:aditivna_baza1}
    If
$B=\Big\{a_{2^t-4}^rw_4^dw_2^bw_3^c\mid r<2, d<2^{t-2}, (b,c)\in T\Big\}$,
then the cosets of elements of $B$ form an additive basis of the quotient $\mathbb Z_2[w_2,w_3,w_4,a_{2^t-4}]/I$.

Consequently, if $N$ is the cardinality of the set $T$ (as in the proof of Proposition \ref{a_na_kvadrat}), then 
\[\dim\Big(\mathbb Z_2[w_2,w_3,w_4,a_{2^t-4}]/I\Big)=2^{t-1}\cdot N.\]
\end{corollary}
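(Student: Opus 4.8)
The plan is to read off the additive basis directly from the Gr\"obner basis established in Theorem \ref{thmGrebner}, using the very definition of Gr\"obner basis adopted in Section \ref{sec:preliminaries}: the cosets of the monomials \emph{not} divisible by any leading monomial $\mathrm{LM}(f)$, $f\in G$, form a vector space basis for $\mathbb Z_2[w_2,w_3,w_4,a_{2^t-4}]/I$. So the entire task reduces to identifying precisely which monomials in the four variables avoid divisibility by all the leading monomials, and checking that this set of monomials is exactly $B$.

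First I would recall the list of leading monomials. From the computation quoted just before the corollary, $\mathrm{LM}(g_{2^t-3+2^i}) = w_2^{2^{t-1}-2^i}w_3^{2^i-1}$ for $0\leqslant i\leqslant t-1$; from the proof of Theorem \ref{thmGrebner}, $\mathrm{LM}(g_{2^t}) = w_4^{2^{t-2}}$ and $\mathrm{LM}(a_{2^t-4}^2+Pa_{2^t-4}+Q) = a_{2^t-4}^2$. Now take an arbitrary monomial $a_{2^t-4}^r w_4^d w_2^b w_3^c$. It is not divisible by $a_{2^t-4}^2$ precisely when $r<2$; it is not divisible by $w_4^{2^{t-2}}$ precisely when $d<2^{t-2}$; and — since the monomials $\mathrm{LM}(g_{2^t-3+2^i})$ involve only $w_2$ and $w_3$ — it is not divisible by any of them precisely when $w_2^bw_3^c$ is not divisible by any $w_2^{2^{t-1}-2^i}w_3^{2^i-1}$, i.e.\ when for every $i\in\{0,1,\dots,t-1\}$ one has $b<2^{t-1}-2^i$ or $c<2^i-1$. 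By the definition of $T$ recalled in the proof of Proposition \ref{a_na_kvadrat}, this last condition is exactly $(b,c)\in T$. Hence the set of ``surviving'' monomials is exactly $B$, and by the definition of Gr\"obner basis their cosets form an additive basis of $\mathbb Z_2[w_2,w_3,w_4,a_{2^t-4}]/I$, which is the first assertion.

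For the dimension count, I would simply observe that the three constraints defining $B$ are independent: $r$ ranges over $\{0,1\}$ (2 choices), $d$ ranges over $\{0,1,\dots,2^{t-2}-1\}$ ($2^{t-2}$ choices), and $(b,c)$ ranges over $T$ ($N$ choices by definition of $N$). Therefore $\dim\big(\mathbb Z_2[w_2,w_3,w_4,a_{2^t-4}]/I\big) = |B| = 2\cdot 2^{t-2}\cdot N = 2^{t-1}N$.

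There is essentially no obstacle here — the corollary is a bookkeeping consequence of Theorem \ref{thmGrebner} together with the explicit leading monomials. The only point requiring a word of care is the factorization of the divisibility condition: one must note that $w_4$ and $a_{2^t-4}$ do not occur in the leading monomials $\mathrm{LM}(g_{2^t-3+2^i})$ (this is exactly Lemma \ref{g3=g4}), so that divisibility of $a_{2^t-4}^r w_4^d w_2^b w_3^c$ by $\mathrm{LM}(g_{2^t-3+2^i})$ depends only on the exponents $b$ and $c$; and symmetrically $\mathrm{LM}(g_{2^t})=w_4^{2^{t-2}}$ and $\mathrm{LM}(a_{2^t-4}^2+\cdots)=a_{2^t-4}^2$ each involve a single variable absent from the other leading monomials, so the three conditions genuinely decouple. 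Granting that, the proof is immediate.
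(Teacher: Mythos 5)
Your proposal is correct and follows essentially the same route as the paper: both arguments read the basis directly off Theorem \ref{thmGrebner} via the paper's definition of Gr\"obner basis, using the leading monomials $w_2^{2^{t-1}-2^i}w_3^{2^i-1}$, $w_4^{2^{t-2}}$ and $a_{2^t-4}^2$ to see that the non-divisible monomials are exactly those in $B$, and then counting $2\cdot 2^{t-2}\cdot N=2^{t-1}N$. Your extra remark that the divisibility conditions decouple because the leading monomials involve disjoint sets of variables (via Lemma \ref{g3=g4}) is a point the paper leaves implicit, but it is the same argument.
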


Before moving on to the proof of Theorem \ref{Cohomology_G_2^t,4} let us note one more thing. In the same way as above, one can verify that the set $\{g_{2^t-3+2^i}\mid 0\leqslant i\leqslant t-1\}\cup\{g_{2^t}\}$ is a Gr\"obner basis for the ideal
$(g_{2^t-2},g_{2^t-1},g_{2^t})\unlhd\mathbb Z_2[w_2,w_3,w_4]$
(w.r.t.\ the lexicographic ordering with $w_3<w_2<w_4$), and consequently, that the cosets of monomials $w_4^dw_2^bw_3^c$ such that $d<2^{t-2}$ and $(b,c)\in T$, constitute a vector space basis of the quotient $\mathbb Z_2[w_2,w_3,w_4]/(g_{2^t-2},g_{2^t-1},g_{2^t})$. However, we know that this quotient is isomorphic to $\im p^*$, where $p:\gr_{2^t,4}\rightarrow G_{2^t,4}$ is the two-fold covering. Therefore, $ \dim(\im p^*)=2^{t-2}\cdot N$, and Lemma \ref{1/2dim} applies to give us
\begin{equation}\label{dimH^*}
    \dim H^*(\gr_{2^t,4})=2^{t-1}\cdot N.
\end{equation}

\subsection{Proof of Theorem \ref{Cohomology_G_2^t,4}}

Recall that we are left to prove the equality $I=\ker\Psi$. By (\ref{I_podskup_ker}) we know that $I\subseteq\ker\Psi$, and so we have the following short exact sequence of vector spaces:
\[0 \rightarrow\ker\Psi/I \rightarrow \mathbb Z_2[w_2,w_3,w_4,a_{2^t-4}]/I \rightarrow \mathbb Z_2[w_2,w_3,w_4,a_{2^t-4}]/\ker\Psi\rightarrow 0.\]
According to Corollary \ref{cor:aditivna_baza1},
\[\dim\Big(\mathbb Z_2[w_2,w_3,w_4,a_{2^t-4}]/I\Big)=2^{t-1}\cdot N,\]
while the isomorphism (\ref{Psiisom}) and equality (\ref{dimH^*}) imply
\[\dim\Big(\mathbb Z_2[w_2,w_3,w_4,a_{2^t-4}]/\ker\Psi\Big)=\dim H^*(\gr_{2^t,4})=2^{t-1}\cdot N,\]
leading to the conclusion that $\dim(\ker\Psi/I)=0$, i.e., $I=\ker\Psi$. This completes the proof of our main theorem.

\subsection{Some additional facts about $H^*(\gr_{2^t,4})$}

First of all, since we now have $I=\ker\Psi$, Corollary \ref{cor:aditivna_baza1} and the isomorphism (\ref{Psiisom}) give us the following assertion.
\begin{corollary}\label{cor:aditivna_baza}
    The set of cohomology classes
\[\widetilde B\!=\!\!\Big\{\wa_{2^t-4}^r\w_4^d\w_2^b\w_3^c\mid\! r<2, d<2^{t-2}\!, \big(\forall i\!\in\!\{0,\!1,\ldots,t-1\}\!\big) \, \,b<2^{t-1}-2^i \,\,\,\vee\,\,\, c<2^i-1\!\Big\}\]
is a vector space basis of $H^*(\gr_{2^t,4})$.
\end{corollary}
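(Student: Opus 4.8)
The plan is to transport Corollary \ref{cor:aditivna_baza1} along the isomorphism $\overline\Psi$ from (\ref{Psiisom}). Having just established in the proof of Theorem \ref{Cohomology_G_2^t,4} that $\ker\Psi=I$, we obtain that
\[\overline\Psi:\mathbb Z_2[w_2,w_3,w_4,a_{2^t-4}]/I\longrightarrow H^*(\gr_{2^t,4})\]
is an isomorphism of graded algebras, sending the coset of $w_i$ to $\w_i$ for $i=2,3,4$ and the coset of $a_{2^t-4}$ to $\wa_{2^t-4}$. In particular $\overline\Psi$ carries the coset of a monomial $a_{2^t-4}^rw_4^dw_2^bw_3^c$ to the product $\wa_{2^t-4}^r\w_4^d\w_2^b\w_3^c$ in $H^*(\gr_{2^t,4})$.

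First I would recall from Corollary \ref{cor:aditivna_baza1} that the cosets of the monomials in $B=\{a_{2^t-4}^rw_4^dw_2^bw_3^c\mid r<2,\ d<2^{t-2},\ (b,c)\in T\}$ form a vector space basis of $\mathbb Z_2[w_2,w_3,w_4,a_{2^t-4}]/I$. Since a linear isomorphism maps a basis to a basis, the classes $\wa_{2^t-4}^r\w_4^d\w_2^b\w_3^c$ with $r<2$, $d<2^{t-2}$ and $(b,c)\in T$ form a basis of $H^*(\gr_{2^t,4})$. It then remains only to unwind the definition of $T$: as recorded in the proof of Proposition \ref{a_na_kvadrat}, the condition $(b,c)\in T$ says precisely that for every $i\in\{0,1,\ldots,t-1\}$ one has $b<2^{t-1}-2^i$ or $c<2^i-1$, which is exactly the condition appearing in the description of $\widetilde B$. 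Hence the set of images is literally $\widetilde B$, and the proof is complete.

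The corollary is a transport of structure, so there is no real obstacle left: the substantive work --- the Gr\"obner basis of Theorem \ref{thmGrebner}, the dimension equality (\ref{dimH^*}), and the identification $\ker\Psi=I$ --- has already been done. The only points deserving a line of care are that $\overline\Psi$ is injective, so that distinct monomials in $B$ yield distinct and linearly independent classes in $\widetilde B$, and that the two formulations of the membership condition for $T$ coincide verbatim; both are immediate.
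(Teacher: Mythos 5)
Your proposal is correct and is exactly the paper's argument: the paper also obtains this corollary by combining $I=\ker\Psi$ with Corollary \ref{cor:aditivna_baza1} and the isomorphism (\ref{Psiisom}), transporting the monomial basis of $\mathbb Z_2[w_2,w_3,w_4,a_{2^t-4}]/I$ to $H^*(\gr_{2^t,4})$. Nothing is missing.
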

\begin{remark}\label{remark1}
 We know that the basis elements with $r=0$ belong to $\im p^*$, and since $\dim H^*(\gr_{2^t,4})=2\dim(\im p^*)$ (by Lemma \ref{1/2dim}), those with $r=1$ do not belong to $\im p^*$.
\end{remark}

Now, in order to prove that $\wa_{2^t-4}^2\neq0$ in $H^*(\gr_{2^t,4})$, we recall from \cite{JP} that the square of an (arbitrarily chosen) indecomposable class from $H^{2^t-4}(\gr_{2^t-1,3})$ is nonzero. So it suffices to prove that $\widetilde j^*(\wa_{2^t-4})\in H^{2^t-4}(\gr_{2^t-1,3})$ is indecomposable, where $\widetilde j:\gr_{2^t-1,3}\hookrightarrow\gr_{2^t,4}$ is induced by the inclusion $\mathbb R^{2^t-1}\hookrightarrow\mathbb R^{2^t}$ (namely, then $\widetilde j^*(\wa_{2^t-4}^2)=\widetilde j^*(\wa_{2^t-4})^2\neq0$, and so $\wa_{2^t-4}^2\neq0$). We prove this by using \cite[Lemma 2.1(b)]{JP}, and the indecomposability of $\widetilde j^*(\wa_{2^t-4})$ follows immediately from the next lemma and the fact that $\wa_{2^t-4}$ is itself indecomposable, i.e., $\wa_{2^t-4}\notin\im p^*$ (see (\ref{indec})). Note that the next lemma deals with the "unoriented" Grassmannians.
\begin{lemma}
    Let $w_1:H^{2^t-4}(G_{2^t,4})\rightarrow H^{2^t-3}(G_{2^t,4})$ be multiplication with the Stiefel--Whitney class $w_1=w_1(\gamma_{2^t,4})\in H^1(G_{2^t,4})$, and $j^*:H^{2^t-4}(G_{2^t,4})\rightarrow H^{2^t-4}(G_{2^t-1,3})$ the map induced by $j:G_{2^t-1,3}\hookrightarrow G_{2^t,4}$ (which again comes from the inclusion $\mathbb R^{2^t-1}\hookrightarrow\mathbb R^{2^t}$). Then
    \[\ker w_1 \cap \ker j^* = 0.\]
\end{lemma}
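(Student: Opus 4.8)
The plan is to work entirely inside Borel's description of $H^*(G_{2^t,4})$ from (\ref{Borel}), namely $\mathbb Z_2[w_1,w_2,w_3,w_4]/(\overline w_{2^t-3},\overline w_{2^t-2},\overline w_{2^t-1},\overline w_{2^t})$, and to translate both maps $w_1\cdot(-)$ and $j^*$ into explicit algebraic operations on representative polynomials. For $j^*$, the inclusion $G_{2^t-1,3}\hookrightarrow G_{2^t,4}$ corresponds on Stiefel--Whitney classes to sending $w_4(\gamma_{2^t,4})\mapsto 0$ and $w_i(\gamma_{2^t,4})\mapsto w_i(\gamma_{2^t-1,3})$ for $i=1,2,3$ (the restricted bundle $\gamma_{2^t,4}|_{G_{2^t-1,3}}$ is $\gamma_{2^t-1,3}\oplus\varepsilon^1$, a $3$-plane bundle plus a trivial line, hence $w_4$ of it vanishes). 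So $\ker j^*$ at the level of polynomials in $\mathbb Z_2[w_1,w_2,w_3,w_4]$ consists, modulo the Borel relations of $G_{2^t,4}$, of classes represented by polynomials divisible by $w_4$ after reduction modulo the Borel relations of $G_{2^t-1,3}$ — more precisely a class lies in $\ker j^*$ iff some representative, after setting $w_4=0$, lies in the ideal $(\overline w_{2^t-3},\overline w_{2^t-2},\overline w_{2^t-1})\unlhd\mathbb Z_2[w_1,w_2,w_3]$ (using $\overline w_{2^t}\equiv 0$ there, which follows from the recurrence and the known vanishing facts).

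Next I would pin down $\ker w_1$ in degree $2^t-4$. Multiplication by $w_1$ on $H^*(G_{2^t,4})$ fits into the Gysin sequence of the double cover $p:\widetilde G_{2^t,4}\to G_{2^t,4}$ (whose Euler class is $w_1$): the kernel of $w_1\cdot(-):H^{2^t-4}(G_{2^t,4})\to H^{2^t-3}(G_{2^t,4})$ is exactly the image of $p_!$ (the transfer), equivalently the image of $\widetilde j$-type reasoning — but more usefully, $x\in\ker w_1$ iff $x\in\operatorname{im}\big(H^{2^t-4}(\widetilde G_{2^t,4})\xrightarrow{\text{transfer}}\big)$, i.e.\ iff $x=p_*(p^*x)$ is in the image of the transfer. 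Since $H^{2^t-4}(G_{2^t,4})$ lies in a range where, by \cite[Theorem 2.1(2)]{Korbas:Osaka}, everything on the $\widetilde G$ side below degree $2^t-4$ is pulled back from $G$, one can describe $\ker w_1$ concretely in terms of Borel representatives: $x\in\ker w_1$ iff $w_1\cdot x\in(\overline w_{2^t-3},\dots,\overline w_{2^t})$, and since none of the $\overline w_r$ for $r<2^t-3$ appears, this forces $x$ to be representable without $w_1$ modulo the ideal (alternatively, $\ker w_1$ in this degree equals $p^*$ of the degree-$(2^t-4)$ part, reduced mod $w_1$). Combining the two descriptions: if $x\in\ker w_1\cap\ker j^*$, then we may take a representative $f\in\mathbb Z_2[w_2,w_3,w_4]$ (no $w_1$) with $w_1 f\in(\overline w_{2^t-3},\dots,\overline w_{2^t})$, while $f|_{w_4=0}\in(\overline w_{2^t-3},\overline w_{2^t-2},\overline w_{2^t-1})\unlhd\mathbb Z_2[w_1,w_2,w_3]$.

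The core of the argument is then a Gröbner-basis / degree count. Using the Gröbner basis $\{g_{2^t-3+2^i}\mid 0\leqslant i\leqslant t-1\}\cup\{g_{2^t}\}$ for $(g_{2^t-2},g_{2^t-1},g_{2^t})\unlhd\mathbb Z_2[w_2,w_3,w_4]$ established just before this lemma (with leading monomials $w_2^{2^{t-1}-2^i}w_3^{2^i-1}$ and $w_4^{2^{t-2}}$), a representative $f$ of degree $2^t-4$ that is zero in $\operatorname{im}p^*$ must itself lie in that ideal — but $2^t-4$ is strictly below the degree $2^t-3$ of the lowest-degree nonzero $g$ appearing (indeed $|g_{2^t-3+1}|=|g_{2^t-2}|=2^t-2$ and $|g_{2^t-3}|$-type elements vanish), so the only element of the ideal in degree $2^t-4$ is zero; hence $f\equiv 0$, i.e.\ $x=0$. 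The translation between the ``$w_1$-present'' Borel picture of $G_{2^t,4}$ and the ``$w_1$-reduced'' picture $\operatorname{im}p^*=\mathbb Z_2[w_2,w_3,w_4]/(g_{2^t-2},g_{2^t-1},g_{2^t})$ is exactly what the hypotheses $x\in\ker w_1$ and $x\in\ker j^*$ buy us: the first lets us drop $w_1$, the second (via $j^*$ killing $w_4$ and landing in $G_{2^t-1,3}$, whose cohomology in this range is again $\operatorname{im}$ of a pullback) confirms the representative is genuinely in the degree-$(2^t-4)$ part of the $w_1$-reduced quotient, where by the additive-basis description (monomials $w_4^dw_2^bw_3^c$, $d<2^{t-2}$, $(b,c)\in T$) nonzero classes exist — so the statement is really that the intersection of the two kernels is forced to vanish by these constraints being incompatible except at $0$.

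The main obstacle I anticipate is making the identification of $\ker w_1$ precise: one must be careful that ``$x\in\ker w_1$'' genuinely means $x$ has a $w_1$-free representative (rather than merely $w_1 x\in(\overline w_r)$, which a priori could be solved using relations that reintroduce $w_1$). This is where one should invoke the structure of the Borel presentation in low degrees — since $\overline w_r$ for $r<2^t-3$ are genuine relations (equal to honest polynomials, not independent generators) and the ideal $(\overline w_{2^t-3},\dots,\overline w_{2^t})$ has no generator of degree $<2^t-3$, multiplication by $w_1$ in degree $2^t-4$ is injective modulo the $w_1$-free part, and its kernel is exactly the transfer image $=p^*\big(H^{2^t-4}(\widetilde G_{2^t,4})\big)\cap\ker(1+\text{deck involution})$, which one checks coincides with the $w_1$-reduced classes. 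Once that lemma-within-the-lemma is in hand, the degree count finishes everything immediately.
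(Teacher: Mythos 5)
There is a genuine gap, and it sits exactly where you flagged your ``main obstacle'': your characterization of $\ker w_1$ is false. In degree $2^t-4$ the Borel ideal $(\overline w_{2^t-3},\overline w_{2^t-2},\overline w_{2^t-1},\overline w_{2^t})$ is zero, so every class has a \emph{unique} polynomial representative; and since $g_{2^t-3}=0$ (the mod-$w_1$ reduction of $\overline w_{2^t-3}$ vanishes), one has $\overline w_{2^t-3}=w_1q$ for a nonzero $q$ of degree $2^t-4$, so $[q]$ is a nonzero element of $\ker w_1$ whose unique representative contains the monomial $w_1^{2^t-4}$. Hence the claim that $x\in\ker w_1$ ``lets us drop $w_1$'' from a representative is simply wrong, and the repair you sketch at the end (``multiplication by $w_1$ is injective modulo the $w_1$-free part'') is exactly the unproven content, and in this degree it fails. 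Moreover, even granting a $w_1$-free representative $f$ divisible by $w_4$ (divisibility by $w_4$ \emph{does} follow from $x\in\ker j^*$ by the degree argument, since $H^*(G_{2^t-1,3})$ has no relations in degree $2^t-4$), your concluding Gr\"obner/degree count is a non sequitur: it presupposes that the class lies in the ideal $(g_{2^t-2},g_{2^t-1},g_{2^t})$, i.e.\ vanishes in the $w_1$-reduced quotient, which is not among the hypotheses --- for instance $w_4w_2^{2^{t-1}-4}$ is $w_1$-free, divisible by $w_4$, and represents a nonzero class. So nothing in your sketch actually forces $f=0$.

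The missing step (and the paper's actual punchline) is elementary: by degree reasons, $x\in\ker w_1$ gives the polynomial identity $w_1f=\alpha\overline w_{2^t-3}$ with $\alpha\in\mathbb Z_2$, and $x\in\ker j^*$ gives $f=w_4h$ (since $\rho(f)$ must be the zero polynomial); hence $w_1w_4h=\alpha\overline w_{2^t-3}$ in $\mathbb Z_2[w_1,w_2,w_3,w_4]$. By (\ref{eq_explicitly_w}) the polynomial $\overline w_{2^t-3}$ contains the summand $w_1^{2^t-3}$, while every monomial on the left-hand side is divisible by $w_4$; therefore $\alpha=0$, whence $f=0$ and $x=0$. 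Your setup (Borel presentation, $j^*$ as the mod-$w_4$ reduction, absence of relations in degree $2^t-4$) agrees with the paper up to this point; what is missing is this comparison of a single monomial, not any appeal to $w_1$-free representatives or to the Gr\"obner basis of $(g_{2^t-2},g_{2^t-1},g_{2^t})$.
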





    \begin{proof}
    By (\ref{Borel}), $H^*(G_{2^t,4})\cong \mathbb Z_2[w_1,w_2,w_3,w_4]/(\overline w_{2^t-3}, \overline w_{2^t-2}, \overline w_{2^t-1}, \overline w_{2^t})$. Let $f\in \mathbb Z_2[w_1,w_2,w_3,w_4]$ be a homogeneous polynomial of (cohomological) degree $2^t-4$ such that the corresponding class $[f]\in H^{2^t-4}(G_{2^t,4})$ is in $\ker w_1 \cap \ker j^*$. From $[f]\in \ker w_1$, in $\mathbb Z_2[w_1,w_2,w_3,w_4]$ we have 
            \begin{equation}\label{eq1}
            w_1 f = \alpha \overline w_{2^t-3},
            \end{equation}
        for some $\alpha\in\mathbb Z_2$. On the other hand, the map $j^*:H^*(G_{2^t,4})\rightarrow H^*(G_{2^t-1,3})$ is the reduction modulo $w_4$ (see \cite[Lemma 2.2(b)]{JP}).
        So, from $[f]\in \ker j^*$, we get $0=j^*[f] = [\rho(f)]$, where $\rho :\mathbb Z_2[w_1,w_2,w_3,w_4] \rightarrow \mathbb Z_2[w_1,w_2,w_3]$ is the mod $w_4$ map. Since there are no relations among polynomials of degree $2^t-4$ (namely, by (\ref{Borel}), $H^*(G_{2^t-1,3})\cong \mathbb Z_2[w_1,w_2,w_3]/(\overline w_{2^t-3}, \overline w_{2^t-2}, \overline w_{2^t-1})$), we conclude $\rho(f) = 0$, so 
        \begin{equation}\label{eq2}
            f = w_4 h,
        \end{equation}
        for some $h\in\mathbb Z_2[w_1,w_2,w_3,w_4]$.

        Now from (\ref{eq1}) and (\ref{eq2})
        \begin{equation}\label{eq3}
            w_1 w_4 h = \alpha \overline w_{2^t-3} \quad \mbox{ in } \mathbb Z_2[w_1,w_2,w_3,w_4].
            \end{equation}
        From (\ref{eq_explicitly_w}), we see that $\overline w_{2^t-3}$ has $w_1^{2^t-3}$ as a summand, while there is no such summand on the left-hand side in (\ref{eq3}). We conclude $
        \alpha = 0$ and thus $f = 0$.
    \end{proof}

    So, we have shown that $\wa_{2^t-4}^2\neq0$, because $\widetilde j^*(\wa_{2^t-4})^2\neq0$. Furthermore, according to \cite[(3.14)]{JP}, $\widetilde j^*(\wa_{2^t-4})^2=\widetilde g_{2^t-4}\cdot j^*(\wa_{2^t-4})+\gamma\w_2^{2^t-4}$, for some (unknown) $\gamma\in\mathbb Z_2$ and a unique nonzero $\widetilde g_{2^t-4}\in H^{2^t-4}(\gr_{2^t-1,3})$ (which corresponds to the coset of the polynomial $g_{2^t-4}$ via (\ref{imp*}) for $k=3$ and $n=2^t-1$) . On the other hand, by Proposition \ref{a_na_kvadrat},
    \[\widetilde j^*(\wa_{2^t-4})^2=\widetilde j^*(\widetilde P)\cdot\widetilde j^*(\wa_{2^t-4})+\widetilde j^*(\widetilde Q),\]
    and we conclude that $\widetilde P$ must be nonzero.

    Therefore, we have proved the following proposition.

    \begin{proposition}\label{Pneq0}
      For the indecomposable class $\wa_{2^t-4}\in H^{2^t-4}(\gr_{2^t,4})$ we have $\wa_{2^t-4}^2\neq0$. Moreover, in the description of $H^*(\gr_{2^t,4})$ given in Theorem \ref{Cohomology_G_2^t,4} the polynomial $P$ cannot be zero.
    \end{proposition}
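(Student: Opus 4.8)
The plan is to reduce the assertion $\wa_{2^t-4}^2 \neq 0$ to a statement about the ``unoriented-friendly'' Grassmannian $\gr_{2^t-1,3}$, where the analogous fact was already settled in \cite{JP}. Concretely, I would use the inclusion $\widetilde j:\gr_{2^t-1,3}\hookrightarrow\gr_{2^t,4}$ coming from $\mathbb R^{2^t-1}\hookrightarrow\mathbb R^{2^t}$ and the fact that $\widetilde j^*$ is a ring homomorphism, so $\widetilde j^*(\wa_{2^t-4}^2)=\widetilde j^*(\wa_{2^t-4})^2$. Thus it suffices to show $\widetilde j^*(\wa_{2^t-4})^2\neq0$, and for this — since \cite{JP} tells us the square of \emph{any} indecomposable class in $H^{2^t-4}(\gr_{2^t-1,3})$ is nonzero — it is enough to prove that $\widetilde j^*(\wa_{2^t-4})$ is indecomposable in $H^*(\gr_{2^t-1,3})$, i.e.\ that it does not lie in $\im p^*$ for the covering $p:\gr_{2^t-1,3}\to G_{2^t-1,3}$.

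To prove that indecomposability, I would invoke \cite[Lemma 2.1(b)]{JP}, which (via the Gysin sequence of the double cover) characterizes classes in $\im p^*$ in degree $2^t-4$ in terms of the action of $w_1$ and of restriction; this reduces everything to a purely algebraic statement about the \emph{unoriented} cohomology rings, namely that $\ker w_1 \cap \ker j^* = 0$ on $H^{2^t-4}(G_{2^t,4})$, where $w_1$ denotes cup multiplication by $w_1(\gamma_{2^t,4})$ and $j:G_{2^t-1,3}\hookrightarrow G_{2^t,4}$ is the corresponding inclusion of unoriented Grassmannians. This last statement is exactly the Lemma stated just above. Its proof is the short Borel-presentation computation already sketched: writing a representative polynomial $f$ of degree $2^t-4$, the condition $[f]\in\ker w_1$ forces $w_1 f=\alpha\,\overline w_{2^t-3}$ in $\mathbb Z_2[w_1,w_2,w_3,w_4]$ for some $\alpha\in\mathbb Z_2$ (since $\overline w_{2^t-3}$ is the lowest-degree relation and $w_1 f$ has degree $2^t-3$); the condition $[f]\in\ker j^*$, together with the fact that $j^*$ is reduction mod $w_4$ and that $H^{2^t-4}(G_{2^t-1,3})$ has no relations in this degree, forces $f=w_4 h$; combining, $w_1 w_4 h=\alpha\,\overline w_{2^t-3}$, but $\overline w_{2^t-3}$ contains the monomial $w_1^{2^t-3}$ by \eqref{eq_explicitly_w} while $w_1 w_4 h$ cannot, so $\alpha=0$ and hence $f=0$.

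Having established $\wa_{2^t-4}^2\neq0$, the second half of the proposition — that $P$ (equivalently $\widetilde P$) cannot be zero — I would obtain by comparing two expressions for $\widetilde j^*(\wa_{2^t-4})^2$. On one hand, restricting the relation $\wa_{2^t-4}^2 = \widetilde P\wa_{2^t-4}+\widetilde Q$ of Proposition \ref{a_na_kvadrat} gives $\widetilde j^*(\wa_{2^t-4})^2 = \widetilde j^*(\widetilde P)\cdot\widetilde j^*(\wa_{2^t-4}) + \widetilde j^*(\widetilde Q)$, so the coefficient of the ``linear in $\widetilde j^*(\wa_{2^t-4})$'' term is $\widetilde j^*(\widetilde P)$. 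On the other hand, \cite[(3.14)]{JP} expresses the same square as $\widetilde g_{2^t-4}\cdot j^*(\wa_{2^t-4}) + \gamma\,\w_2^{2^t-4}$ with $\widetilde g_{2^t-4}\in H^{2^t-4}(\gr_{2^t-1,3})$ a \emph{nonzero} class. Matching the two descriptions (using the known additive basis of $H^*(\gr_{2^t-1,3})$ to see that the ``$\widetilde a$-linear part'' is well defined) forces $\widetilde j^*(\widetilde P)=\widetilde g_{2^t-4}\neq0$, hence $\widetilde P\neq0$ and so $P\neq0$ in $\mathbb Z_2[w_2,w_3,w_4]$.

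The main obstacle, and the only step requiring genuine care, is the indecomposability of $\widetilde j^*(\wa_{2^t-4})$: one must correctly set up the translation between the oriented statement (``not in $\im p^*$'') and the unoriented algebraic statement via the Gysin-sequence lemma of \cite{JP}, and be sure the degree-$(2^t-4)$ bookkeeping — in particular that $\overline w_{2^t-3}$ is genuinely the first relation that can appear and that $H^{2^t-4}(G_{2^t-1,3})$ is relation-free — is accurate. The comparison with \cite[(3.14)]{JP} is then essentially automatic once the additive basis from Corollary \ref{cor:aditivna_baza} (and its $\gr_{2^t-1,3}$ analogue) is in hand.
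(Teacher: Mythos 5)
Your proposal follows essentially the same route as the paper's own proof: reducing $\wa_{2^t-4}^2\neq0$ to the indecomposability of $\widetilde j^*(\wa_{2^t-4})$ in $H^{2^t-4}(\gr_{2^t-1,3})$ via \cite[Lemma 2.1(b)]{JP}, proving the auxiliary lemma $\ker w_1\cap\ker j^*=0$ by the same Borel-presentation argument (including the $w_1^{2^t-3}$ summand of $\overline w_{2^t-3}$ forcing $\alpha=0$), and then deducing $\widetilde P\neq0$ by comparing the restriction of Proposition \ref{a_na_kvadrat} with \cite[(3.14)]{JP}. The argument is correct and matches the paper step for step.
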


    \section{The case $t=3$}
    \label{sec:example}
    
    In this section we restrict our attention to the case $t=3$, i.e., to the cohomology algebra $H^*(\widetilde G_{8,4})$. Theorem \ref{Cohomology_G_2^t,4} in this case has the following form:
     \[H^*(\widetilde G_{8,4}) \cong \frac{\mathbb Z_2[ w_2, w_3, w_4, a_4]}{(g_6, g_7, g_8, a_4^2+P a_4+Q)}.\]
    We will use the additive basis from Corollary \ref{cor:aditivna_baza} to get more information on this algebra. Namely, we will compute the polynomials $P$ and $Q$ up to a coefficient from $\mathbb Z_2$. 
    
    \begin{proposition}\label{prop:G8,4}
    We have the ismorphism of graded algebras:
        \[H^*(\widetilde G_{8,4}) \cong \frac{\mathbb Z_2[ w_2, w_3, w_4,  a_4]}{(g_6, g_7, g_8, a_4^2 + a_4 w_2^2 +\gamma w_2w_3^2)},\]
        for some $\gamma\in\mathbb Z_2$.
    \end{proposition}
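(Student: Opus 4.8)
The plan is to pin down the polynomials $P$ and $Q$ from Theorem \ref{Cohomology_G_2^t,4} in the case $t=3$ by purely computational means, using the explicit additive basis provided by Corollary \ref{cor:aditivna_baza}. For $t=3$ we have $2^t-4=4$, and $P\in H^4(\gr_{8,4})$, $Q\in H^8(\gr_{8,4})$ must be polynomials in $\w_2,\w_3,\w_4$. First I would write down the set $T$ explicitly for $t=3$: it consists of pairs $(b,c)$ of nonnegative integers such that for each $i\in\{0,1,2\}$ one has $b<4-2^i$ or $c<2^i-1$, i.e. ($b<3$ or $c<0$), ($b<2$ or $c<1$), ($b<0$ or $c<3$); unravelling, $T=\{(0,0),(1,0),(2,0),(0,1),(1,1),(0,2)\}$, so $N=6$. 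Then Corollary \ref{cor:aditivna_baza} gives a basis of $H^*(\gr_{8,4})$ consisting of $\wa_4^r\w_4^d\w_2^b\w_3^c$ with $r<2$, $d<2$, $(b,c)\in T$; in particular $H^4(\gr_{8,4})$ has basis $\{\w_2^2,\w_4,\wa_4\}$ (degree-$4$ monomials from the list) and $H^8(\gr_{8,4})$ has a small explicit basis as well.

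Next I would determine $P$. Since $|P|=4$ and $P$ is a polynomial in $\w_2,\w_3,\w_4$, we have $P=\varepsilon_1\w_2^2+\varepsilon_2\w_4$ for some $\varepsilon_1,\varepsilon_2\in\mathbb Z_2$ (there is no $\w_3$-monomial in degree $4$ built from $\w_2,\w_3,\w_4$ other than these, as degree $4$ forces $w_2^2$ or $w_4$ or $w_1$-terms which are absent). By Proposition \ref{Pneq0} we already know $P\neq0$. To decide between the three remaining possibilities I would use the restriction map $\widetilde j^*:H^*(\gr_{8,4})\to H^*(\gr_{7,3})$ exactly as in the paragraph preceding Proposition \ref{Pneq0}: restricting the relation $\wa_4^2=\widetilde P\wa_4+\widetilde Q$ and comparing with \cite[(3.14)]{JP}, which reads $\widetilde j^*(\wa_4)^2=\widetilde g_4\cdot\widetilde j^*(\wa_4)+\gamma\w_2^4$, we get $\widetilde j^*(\widetilde P)=\widetilde g_4$. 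Since $\widetilde j^*$ is the mod $w_4$ reduction on $\im p^*$ and $g_4=w_2^2$ for $k=3$ (immediate from (\ref{eq_explicitly_g})), this forces the $\w_4$-free part of $P$ to be $w_2^2$, i.e. $\varepsilon_1=1$. It remains to rule out a possible extra $\w_4$ summand; here I would invoke the grading/degree bookkeeping together with the fact that $\w_4$ restricts to $0$ under $\widetilde j^*$, so $\widetilde j^*$ alone cannot see an $\varepsilon_2\w_4$ term — this is the point where a genuine computation in $H^*(\gr_{8,4})$ (using the Gröbner basis of $I$ from Theorem \ref{thmGrebner} to reduce $\wa_4^2$, which is already a basis monomial, against the relation $a_4^2+Pa_4+Q$) is needed, and I expect this normalization of the $\w_4$-coefficient to be the main obstacle. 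One clean way around it: since $\wa_4$ was only chosen up to addition of a polynomial in $\w_2,\w_3$ (see (\ref{indec}), (\ref{a})), the coefficient $\varepsilon_2$ may be adjustable, or else a direct low-degree computation with the Borel presentation of $H^*(G_{8,4})$ and the double cover $p$ settles it; in either case one concludes $P=\w_2^2$.

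Finally I would determine $Q\in H^8(\gr_{8,4})$. Writing $Q$ in terms of the degree-$8$ monomials in $\w_2,\w_3,\w_4$ and then reducing modulo the ideal $(g_6,g_7,g_8)$ using the Gröbner basis $\{g_{2^t-3+2^i}:0\le i\le t-1\}\cup\{g_{2^t}\}=\{g_6,g_7,g_8\}$ (with leading monomials $w_2^2w_3$, $w_2w_3$ wait — more precisely $\mathrm{LM}(g_{2^t-3+2^i})=w_2^{2^{t-1}-2^i}w_3^{2^i-1}$, so for $t=3$: $w_2^3$, $w_2^2w_3$, $w_3^3$ — and $\mathrm{LM}(g_8)=w_4^2$), I would reduce any candidate for $Q$ to a normal-form combination of the allowed basis monomials $\w_4^d\w_2^b\w_3^c$ with $d<2$, $(b,c)\in T$ of degree $8$: these are $\w_4\w_2^2$, $\w_4\w_3^2$... and $\w_2\w_3^2$, etc. Then restricting the relation via $\widetilde j^*$ again gives $\widetilde j^*(\widetilde Q)$ in terms of the known structure of $H^8(\gr_{7,3})$ from \cite{JP}, pinning down all coefficients of $Q$ modulo those that vanish under $\widetilde j^*$ (i.e. modulo $\w_4$-multiples). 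The stated answer $Q=\gamma\w_2\w_3^2$ (with $\gamma$ the same unknown bit appearing in \cite[(3.14)]{JP}) should then follow once one checks that no $\w_4\w_2^2$ or $\w_4\w_3^2$ term can occur — again a grading/degree argument combined with the observation that, after the normalization (\ref{a}) of $\wa_4$ made in Section \ref{sec:Indecomposables}, any $\w_4$-divisible part of $Q$ would have to come from the term $\widetilde w_4\overline R$ in the proof of Proposition \ref{a_na_kvadrat}, and one verifies directly (using $\dim H^*(\gr_{8,4})=2^{t-1}N=24$ and the explicit basis) that it is absent. Assembling $P=\w_2^2$ and $Q=\gamma\w_2\w_3^2$ into (\ref{main_thm}) gives the claimed presentation of $H^*(\widetilde G_{8,4})$.
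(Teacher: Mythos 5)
Your setup and the $\widetilde j^*$-restriction step reproduce only the easy half of the argument: as in the paper, one writes $\wa_4^2=\alpha\wa_4\w_4+\beta\wa_4\w_2^2+\gamma\w_2\w_3^2+\delta\w_2^2\w_4$ in the degree-$8$ basis and gets $\beta=1$ (and identifies $\gamma$ with the coefficient from \cite[(3.14)]{JP}) by restricting to $\gr_{7,3}$. The actual content of the proposition is exactly the part you defer, namely that the $\w_4$-divisible terms vanish ($\alpha=\delta=0$), which $\widetilde j^*$ cannot detect, and none of your three suggestions closes this gap. Reducing $\wa_4^2$ by the Gr\"obner basis of $I$ is circular: that basis contains $a_4^2+Pa_4+Q$ with the very $P,Q$ you are trying to compute. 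Re-choosing $\wa_4$ cannot help either: the only admissible change compatible with (\ref{indec}) and (\ref{a}) is $\wa_4\mapsto\wa_4+f(\w_2,\w_3)$, and then $(\wa_4+f)^2=P(\wa_4+f)+(Q+Pf+f^2)$, so $P$ (in particular any $\w_4$-summand of it) is unchanged. Finally, a "direct computation with the Borel presentation of $H^*(G_{8,4})$ and the double cover" only gives information inside $\im p^*$, which does not contain $\wa_4$, hence cannot determine $\wa_4^2$. The paper resolves precisely this point with Steenrod operations: the Wu class $v_4$ vanishes, so $Sq^4=0$ on $H^{12}(\gr_{8,4})$ and on $\wa_4^3$; one shows $Sq^1(\wa_4)=0$ and then, via the Adem relation $Sq^2Sq^2=Sq^3Sq^1$ and the Cartan formula, that $Sq^2(\wa_4)=\wa_4\w_2$; applying $Sq^4$ to $\wa_4\w_2\w_3^2$ forces $\alpha=0$, and applying $Sq^4$ to $\wa_4^3$ forces $\delta=0$. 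Nothing of this kind appears in your sketch, so the proof is incomplete at its decisive step.

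There is also a concrete computational error: for $t=3$ the pair $(1,2)$ belongs to $T$ (it satisfies $b<3$, $b<2$, $c<3$), so $N=7$ and $\dim H^*(\gr_{8,4})=2^{t-1}N=28$, not $24$. Indeed the top class in degree $16=\dim\gr_{8,4}$ is represented by the basis element $\wa_4\w_4\w_2\w_3^2$, which uses $(b,c)=(1,2)$; with your $T$ the cohomology would vanish in degree $16$, contradicting Poincar\'e duality. Since your proposed verification that $Q$ has no $\w_4$-divisible part leans on this dimension count, that step fails as stated as well; in any case a dimension count alone cannot decide the coefficients $\alpha$ and $\delta$.
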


    \begin{proof}
    It suffices to prove that in $H^*(\widetilde G_{8,4})$ one has
    \begin{equation}
        \label{a^2--t=3}
        \wa_4^2=\wa_4\w_2^2 +\gamma\w_2\w_3^2,
    \end{equation}
    for some $\gamma\in\mathbb Z_2$. Corollary \ref{cor:aditivna_baza} gives us the additive basis $\widetilde B$ of $H^*(\widetilde G_{8,4})$. For $0\leqslant j\leqslant16$ let $\widetilde B_j$ be the subset of $\widetilde B$ consisting of elements with cohomological degree $j$. Then $\widetilde B_j$ is a basis of $H^j(\widetilde G_{8,4})$. We have:
 \begin{align*}
\widetilde B_5&=\{\w_2 \w_3\},\, \widetilde B_6=\{\wa_4 \w_2, \w_2 \w_4, \w_3^2\},\, \widetilde B_8=\{\wa_4 \w_4, \wa_4 \w_2^2, \w_2\w_3^2,\w_2^2 \w_4\},\\
\widetilde B_{10}&=\{\wa_4\w_2\w_4,\wa_4\w_3^2,\w_3^2\w_4\},\, \widetilde B_{16}=\{\wa_4\w_2\w_3^2\w_4\}.
 \end{align*}
 Also, it is routine to calculate: $g_6=w_2^3+w_3^2$ and $g_7=w_2^2w_3$; and so
 \begin{equation}\label{g_6g_7}
 \w_2^3+\w_3^2=0,\, \w_2^2\w_3=0, \mbox{ and consequently, } \w_3^3=\w_2^3\w_3=0,\, \w_2^4=\w_2\w_3^2.
 \end{equation}
 
 We can write $\widetilde a_4^2\in H^8(\widetilde G_{8,4})$ in the basis $\widetilde B_8$ as
        \[\wa_4^2 = \alpha\wa_4 \w_4+ \beta\wa_4 \w_2^2+ \gamma\w_2\w_3^2 + \delta\w_2^2 \w_4,\]
        where $\alpha, \beta,\gamma, \delta\in\mathbb Z_2$. If $\widetilde j:\gr_{7,3}\hookrightarrow\gr_{8,4}$ is the embedding, it is known that $\widetilde j^*:H^*(\gr_{8,4})\rightarrow H^*(\gr_{7,3})$ is the reduction modulo $\w_4$ (see \cite[(2.6)]{JP}). Therefore,
        \[\widetilde j^*(\wa_4)^2 = \widetilde j^*(\wa_4^2) = \beta\widetilde j^*(\wa_4)\w_2^2+ \gamma\w_2\w_3^2.\]
        On the other hand, we already know that $\widetilde j^*(\wa_4)$ is an indecomposable class in $H^4(\gr_{7,3})$, and so by \cite[(3.14) and (3.12)]{JP} we have
        \[\widetilde j^*(\wa_4)^2=\widetilde j^*(\wa_4)\widetilde g_4+ \overline\gamma\w_2\w_3^2=\widetilde j^*(\wa_4)\w_2^2+ \overline\gamma\w_2\w_3^2,\]
        for some $\overline\gamma\in\mathbb Z_2$ (it is straightforward to check $g_4=w_2^2$ for $k=3$; see e.g.\ \cite[Example 1]{CP}). Since the elements $\widetilde j^*(\wa_4)\w_2^2,\w_2\w_3^2\in H^8(\gr_{7,3})$ are linearly independent (by \cite[(3.9)]{JP}) it follows from the previous two equations that $\beta=1$ and $\gamma=\overline\gamma$. We now have
        \begin{equation}\label{beta=1}
        \wa_4^2 = \alpha\wa_4 \w_4+\wa_4 \w_2^2+\gamma\w_2\w_3^2 + \delta\w_2^2 \w_4.
        \end{equation}
        Therefore, in order to obtain (\ref{a^2--t=3}) we are left to prove $\alpha=\delta=0$.

        We shall do that by applying appropriate Steenrod squares on (\ref{beta=1}). By Wu's formula (see \cite[p.\ 91]{MilnorSt}), we have 
        \[Sq^1(\widetilde w_2) = \widetilde w_3,\,\, Sq^1(\widetilde w_3) = 0,\,\, Sq^1(\widetilde w_4) = 0,
        \]
        \[ Sq^2(\widetilde w_2) = \widetilde w_2^2,\,\, Sq^2(\widetilde w_3) = \widetilde w_2\widetilde w_3,\,\, Sq^2(\widetilde w_4) = \widetilde w_2 \widetilde w_4.\]
        In what follows, we implicitly use the Cartan formula to calculate the action of Steenrod squares on polynomials in $\w_2$, $\w_3$, and $\w_4$.

        Consider first $Sq^4: H^{12}(\widetilde G_{8,4}) \rightarrow H^{16}(\widetilde G_{8,4})$. Since the dimension of $\widetilde G_{8,4}$ is exactly 16, this $Sq^4$ is the multiplication with the Wu class $v_4 \in H^4(\gr_{8,4})$. After an easy calculation, from \cite[Theorem 11.14]{MilnorSt} and \cite[Theorem 1.1]{BartikKorbas} one gets $v_4 = 0$. Thus, $Sq^4(\wa_4 \w_2 \w_3^2)=0$. On the other hand, by using (\ref{g_6g_7}) and Cartan's formula it is routine to verify that $Sq^i(\w_2 \w_3^2)=0$ for $1\leqslant i\leqslant4$, and so according to (\ref{beta=1}) and (\ref{g_6g_7}):
        \begin{align*}
            0 &= Sq^4(\wa_4 \w_2 \w_3^2)=\sum_{i=0}^4Sq^{4-i}(\wa_4)Sq^i(\w_2 \w_3^2)=Sq^4(\wa_4)\w_2 \w_3^2=\wa_4^2\w_2 \w_3^2\\
            &= (\alpha\wa_4 \w_4+\wa_4 \w_2^2+\gamma\w_2\w_3^2 + \delta\w_2^2 \w_4)\w_2 \w_3^2=\alpha\wa_4\w_2\w_3^2\w_4.
        \end{align*}
       Since $\widetilde B_{16}=\{\wa_4\w_2\w_3^2\w_4\}$, we conclude $\alpha = 0$. Therefore, (\ref{beta=1}) now becomes
       \begin{equation}\label{alfa=0}
        \wa_4^2 = \wa_4 \w_2^2+\gamma\w_2\w_3^2 + \delta\w_2^2 \w_4.
        \end{equation}

        Since $\widetilde B_5=\{\w_2\w_3\}$, if $Sq^1(\wa_4)$ were nonzero, we would have $Sq^1(\wa_4) = \w_2 \w_3$. But then the identity $Sq^1Sq^1=0$ would imply
        \[0=Sq^1(Sq^1(\wa_4)) = Sq^1(\w_2 \w_3) = \w_3^2,\]
        which is false, because $\w_3^2\in\widetilde B_6$. Therefore, $Sq^1(\wa_4)=0$.

       Consequently, $Sq^2(\wa_4^2)=(Sq^1(\wa_4))^2=0$. On the other hand, from (\ref{alfa=0}) we get
        \begin{align*}
            0 &= Sq^2(\wa_4^2)= Sq^2(\wa_4 \w_2^2+\gamma\w_2\w_3^2 + \delta\w_2^2 \w_4)\\
            &=Sq^2(\wa_4)\w_2^2+\wa_4\w_3^2+\gamma\w_2^2\w_3^2+\delta\big(\w_3^2 \w_4+\w_2^3 \w_4\big)= Sq^2(\wa_4)\w_2^2 + \wa_4\w_3^2,
        \end{align*}
        where we used (\ref{g_6g_7}) for the last equality. Since $\wa_4 \w_3^2\in\widetilde B_{10}$, from Remark \ref{remark1} we have $\wa_4 \w_3^2\notin\im p^*$, and we conclude $Sq^2(\wa_4) \notin \im p^* $ (where $p: \widetilde G_{8,4} \rightarrow G_{8,4}$ is the double covering map). So, when we present this element in the basis $\widetilde B_6$, we must have $Sq^2(\wa_4) = \wa_4 \w_2 + \lambda\w_2 \w_4 + \mu\w_3^2$, where $\lambda, \mu \in \mathbb Z_2$. 
        
        Let us show that $\lambda=\mu=0$. The Adem relation $Sq^2 Sq^2 = Sq^3 Sq^1$ gives us $Sq^2(Sq^2(\wa_4)) = Sq^3(Sq^1(\wa_4)) = 0$. On the other hand,
        \begin{align*}
            0&=Sq^2(Sq^2(\wa_4))=Sq^2(\wa_4 \w_2 + \lambda\w_2 \w_4 + \mu\w_3^2)=Sq^2(\wa_4)\w_2+\wa_4\w_2^2\\
             &=(\wa_4 \w_2 + \lambda\w_2 \w_4 + \mu\w_3^2)\w_2+\wa_4\w_2^2=\lambda\w_2^2\w_4 + \mu\w_2\w_3^2,
        \end{align*}
        and since both $\w_2^2\w_4$ and $\w_2\w_3^2$ are basis elements from $\widetilde B_8$, we have $\lambda=\mu=0$. Therefore, we now know that 
        \begin{equation}\label{Sq2}
         Sq^2(\wa_4) = \wa_4 \w_2.   
        \end{equation}

        Finally, $Sq^4(\wa_4^3)=v_4\wa_4^3=0$, and using (\ref{alfa=0}), (\ref{Sq2}) and (\ref{g_6g_7}) we calculate:
        \begin{align*}
            0&=Sq^4(\wa_4^3)=Sq^4(\wa_4\cdot\wa_4^2)=Sq^4(\wa_4)\wa_4^2+\wa_4Sq^4(\wa_4^2)=\wa_4^4+\wa_4\big(Sq^2(\wa_4)\big)^2\\
             &=(\wa_4 \w_2^2+\gamma\w_2\w_3^2 + \delta\w_2^2 \w_4)^2+\wa_4^3\w_2^2=\wa_4^2 \w_2^4+\delta\w_2^4 \w_4^2+\wa_4^2\wa_4\w_2^2\\
             &=\wa_4^2 \w_2^4+(\wa_4 \w_2^2+\gamma\w_2\w_3^2 + \delta\w_2^2 \w_4)\wa_4\w_2^2=\delta\wa_4\w_2^4 \w_4=\delta\wa_4\w_2\w_3^2\w_4
        \end{align*}
       (the fact $\w_2^4 \w_4^2=0$ follows from $\w_2^4 \w_4^2\in H^{16}(\gr_{2^t,4})\cap\im p^*=0$, using $\widetilde B_{16}=\{\wa_4\w_2\w_3^2\w_4\}$ and Remark \ref{remark1}). As before, we conclude $\delta=0$, and (\ref{alfa=0}) simplifies to (\ref{a^2--t=3}), finishing the proof.
\end{proof}

Our description of $H^*(\gr_{2^t,4})$ from Theorem \ref{Cohomology_G_2^t,4} is not complete, since we did not compute the polynomials $P$ and $Q$. We were only able to prove that $P$ must be nonzero (Proposition \ref{Pneq0}). Even in this smallest case ($t=3$) the coefficient $\gamma$ remained undetermined. As we have seen in the proof of Proposition \ref{prop:G8,4}, this coefficient equals the (undetermined) coefficient $\gamma$ from \cite[Theorem 1.1(a)]{JP} for $t=3$. Therefore, the task of computing the polynomials $P$ and $Q$ in general seems quite challenging.


\bibliographystyle{amsplain}

\begin{thebibliography}{10}





\bibitem{BartikKorbas}
{V.\ Bart\' ik and J.\ Korba\v s}, {\it Stiefel--Whitney characteristic classes and parallelizability of Grassmann manifolds}, Rend. Circ. Mat. Palermo (2)\ {\bf 33} (1984) 19--29.

%
\bibitem{BasuChakraborty}
{S.\ Basu and P.\ Chakraborty}, {\it On the cohomology ring and upper characteristic rank of Grassmannian
of oriented $3$-planes}, J.\ Homotopy Relat.\ Struct.\ {\bf 15} (2020) 27--60.


%
\bibitem{Becker}
{T.\ Becker and V.\ Weispfenning}, {\it Gr\"obner Bases: A Computational Approach to Commutative
Algebra}, Graduate Texts in Mathematics, Springer-Verlag, New York (1993).


%
\bibitem{Borel}
{ A.\ Borel}, {\it La cohomologie mod 2 de certains espaces homog\`enes}, Comm.\ Math.\ Helv.\ {\bf 27} (1953) 165--197.

%



%
\bibitem{CP}
{U.\ A.\ Colovi\'c and B.\ I.\ Prvulovi\'c,} {\it Gr\"obner bases in the mod $2$ cohomology of oriented Grassmann manifolds $\widetilde G_{2^t,3}$}, Math. Slovaca {\bf 74(1)} (2024) 195–208.

%

%
\bibitem{Hatcher}
A.\ Hatcher, \textit{Algebraic Topology}, Cambridge University Press, Cambridge (2002).

%
\bibitem{JP}
{M.\ Jovanovi\'c and B.\ I.\ Prvulovi\'c,} {\it On the mod 2 cohomology algebra of oriented Grassmannians}, {J. Homotopy Relat. Struct.} {\bf 19} (2024) 379--396.



%
\bibitem{Korbas:Osaka}
{J.\ Korba\v s,} {\it The characteristic rank and cup-length in oriented Grassmann manifolds}, {Osaka J. Math.} {\bf 52} (2015) 1163--1172.

%
\bibitem{KorbasRusin:Palermo}
{J.\ Korba\v s and T.\ Rusin,} {\it A note on the $\mathbb Z_2$-cohomology algebra of oriented Grassmann manifolds}, {Rend.\ Circ.\ Mat.\ Palermo, II.\ Ser} {\bf 65} (2016) 507--517.



\bibitem{Wendt}
{\' A.\ Matszangosz and M.\ Wendt,} {\it The mod 2 cohomology rings of oriented Grassmannians via Koszul complexes},  {Math.\ Z.} {\bf 308:2} (2024).

\bibitem{McCleary}
{J.\ McCleary,} {\it A User's Guide to Spectral Sequences}, Second Edition, Cambridge University Press, Cambridge (2001).


%
\bibitem{MilnorSt}
J.\ W.\ Milnor, J.\ D.\ Stasheff, \textit{Characteristic Classes}, Ann.\ of Math.\ Studies \textbf{76}, Princeton University Press, New Jersey (1974).


 \bibitem{bane-marko}{B. I. Prvulovi\' c, M. Radovanovi\' c,} {\it On the characteristic rank of vector bundles over oriented Grassmannians}, {Fund. Math.} \textbf{244} (2019), 167--190.


\bibitem{Rusin}
T.\ Rusin, \textit{A note on the cohomology ring of the oriented Grassmann manifolds $\widetilde G_{n,4}$}, Arch. Math. (Brno), \textbf{55}(5):319–331, (2019).



\end{thebibliography}

\end{document}